\documentclass{amsart}

\usepackage{amssymb}
\usepackage{amsmath,amsfonts,amsthm,amstext}
\usepackage{amsmath}
\usepackage{mathrsfs}  
\usepackage[all]{xy}
\usepackage{xcolor}
\usepackage{tikz-cd}

\newtheorem{theorem}{Theorem}[section]
\newtheorem{lemma}[theorem]{Lemma}

\newtheorem{prop}[theorem]{Proposition}
\newtheorem{cor}[theorem]{Corollary}

\newcommand{\Z}{\mathbb{Z}}
\newcommand{\z}{\mathbb{Z}}

\theoremstyle{definition}

\newcommand{\verticalinfty}{\rotatebox{135}{$\infty$}}

\begin{document}

		\title[]{
		Hopfian combinatorial wreath products} \author{ Dessislava H. Kochloukova}
	\address{State University of Campinas (UNICAMP), SP, Brazil \\
		\newline
		email :  desi@unicamp.br
		} 
	\email{}
	\date{}
	\keywords{}

\begin{abstract} Let $A$ be an abelian group. We consider sufficient conditions for the combinatorial wreath product $A \wr_X B$ to be Hopfian generalising results of Bradford and Fournier-Facio. For an integer $m \geq 2$ we show an example where $\mathbb{Z}/ \mathbb{Z}_m \wr_X B$ is not Hopfian but $B$ is  Hopfian. We describe  $Aut(A \wr_X B)$ under some restrictions on $A$, $B$ and $X$.

\end{abstract}

\maketitle

\section{Introduction}

Let $G =  A \wr_X B$ be a combinatorial wreath product. Here $X$ is a set acted by $B$ on the left. All actions and modules in this paper are left ones. If $X = B$ is the $B$-action given by multiplication on the left then $A \wr_X B $ is the restricted wreath product $A \wr B$. The group $G =  A \wr_X B$ was considered in \cite{Cor2} where Cornulier classified when $G$ is finitely presented ( in terms of generators and relations). This was extended to type $FP_n$ in the work of Bartholdi, Cornulier and Kochloukova \cite{B-C-K} and Kropholler and Martino \cite{Kr-Ma}.
In \cite{Cor2} Cornulier classified when  $G$  is residually finite  extending a result of Gruenberg on restricted wreath product \cite{Gru}. 

Recall that a group is Hopfian if it does not have proper quotients isomorphic to it.
Recently Bradford and Fournier-Facio classified in terms of the  Kaplansky Conjecture when $G =A \wr B$ is Hopfian for $A$ a finitely generated abelian group  \cite{Br-FF}. Recall that the Kaplansky direct (resp. stable) finiteness conjecture states that for any group $C$ and  a field $K$ the group algebra $KC$ (resp.  the matrix ring  $M_n(KG)$ for arbitrary $n \geq 1$) is directly finite, i.e. every right inverse is a left inverse. It is known that the stable  finiteness of $KG$ is equivalent to the direct finiteness of $K[H \times G]$ for all finite groups $H$  \cite{D-J} and that the Kaplansky direct finiteness conjecture holds for any field $K$ of characteristic zero \cite{Ka}. The case when $K$ is a field of prime characteristics is still open though the result is known for sofic groups \cite{E-S} and more generally surjunctive groups \cite{Ph}.

Our first result is in contrast to the main result of \cite{Br-FF} where  it was shown that for $B$  a Hopfian  group the group $G = \mathbb{F}_p \wr B$ is Hopfian if and only if the Kaplansky conjecture holds for the ring $\mathbb{F}_p B$ and by now there is no counter example of the Kaplansky conjecture in positive characteristic. 

\medskip
{\bf Proposition A} [Prop. \ref{example}, Cor. \ref{cor1}]
{\it a) 
 Let  $m \not= -1$ be an integer, $K$ a commutative ring with $m+1 \in K^*$. Consider the  HNN extension  $$B = \langle h_0, t \ | \ t^{-1} h_0 t = h_0^{m+1} \rangle \hbox{ and }  H = \langle h_0 \rangle$$
Then the ring
  $End_{K B} ( Ind_H^B ( K))$ is not  directly finite.
  
b) Let $G= \frac{\mathbb{Z}}{m \mathbb{Z}} \wr_X B$ and $X = B/H = \{ b H \ | \ b \in B \}$ be one $B$-orbit. Then $B$ is Hopfian but  $G$ is not Hopfian.
}

\medskip
Note that for the group $G$ from  Proposition  A by the Cornulier criterion of residual finiteness  \cite{Cor} $G$ is not residually finite since $H$ is not closed in the profinite topology of $B$.    This is consistent with the fact that  finitely  generated residually finite groups are Hopfian  \cite{Mal} and our group $G$ is not Hopfian. Note that the group $B$ from Proposition A  is a  metabelian Baumslag-Solitar group, by a result of Hall  finitely generated metabelian groups are residually finite \cite{Hall2}, hence Hopfian.

As in the case of restricted wreath product considered in  \cite{Br-FF}, we show in Theorem B that  whether $G$  is Hopfian  for $G = A \wr_X B$, $A$  a finitely generated, abelian group and under some natural restrictions on $B$ and $X$, depends on some matrix rings over endomorphism rings being directly finite.

We state some definitions needed in the statement of Theorem B.
For a conjugacy class $^B b = \{ g b g^{-1} \ | \ g \in B \}$ we say it is non-abelian if it contains at least two  elements that do not commute. 
For a finitely generated abelian group $A$ using additive notation for $A$ the subgroup $A_p$ denotes the $p$-primary part of $A$ and $A_0$ denotes  $A / tor(A)$.

\medskip
{\bf Theorem B}{\it  
Let $G = A \wr_X B$ be a group,  $D$ be the kernel of the action of $B$ on $X$. Assume that

1) $B$ is a  Hopfian group, 

2) $A$ is a finitely generated abelian group  and if $A$ has exponent 2 then $D$ does not have elements of order 2, 

3)   $Aut(B)$ permutes $\{ stab_B(x) \}_{x \in X}$,

4) every non-trivial conjugacy class in $B$ of an element of $D$ is non-abelian.

Then the following are equivalent:

a) $G$ is a Hopfian group,

b) the ring $End_{\Z B} (A X)$ is directly finite,

c) for every $p$ such that $A_p \not= 0$ we have

c1) if $p$ is prime then $M_{n_p}(End_{\mathbb{F}_p B} (\mathbb{F}_p X))$ is directly finite where $n_p = dim_{\mathbb{F}_p} A_p/ p A_p$,

c2) if $p = 0$ then $M_{n_0}(End_{\Z B} (\Z X))$ is directly finite, where $n_0 = rk A_0$.}

\medskip

Corollary C  is an aplication of the main result of \cite{Lu}
where Lundström gives some sufficient conditions for  $End_{{K} B}( K X)$ to be directly  finite, where $K$ is a field of characteristic zero.  Lundström's sufficiency conditions are $B \backslash X$ is finite and condition 4) from Corollary C.
 The idea behind  the proof  in \cite{Lu} is to use the generalization  of Passman's algebraic proof of the Kaplansky conjecture in characteristic zero from  \cite{Pass}  to a more general setting  \cite{Al}.

\medskip
{\bf Corollary  C} {\it 
Let $G = \mathbb{Z}^n \wr_X B$ be a group, where $B$ acts on $X$ with finitely many orbits. Let $D$ be the kernel of the action of $B$ on $X$. Assume that

1) every non-trivial conjugacy class in $B$ of an element of $D$ is non-abelian,

2) $B$ is a  Hopfian group,  

3)    $Aut(B)$ permutes $\{ stab_B(x) \}_{x \in X}$,

4) $[stab_B(x) : stab_B(x) \cap stab_B(y)] =  [stab_B(y) : stab_B(x) \cap stab_B(y)]$ for all $x,y \in X$.

Then  $G$ is Hopfian.
}

\medskip

 Obviously  if $stab_B(x)$ and $stab_B(y)$ are finite groups  condition 4) holds if and only if $|stab_B(x)| = | stab_B(y)|$.   Still we cannot apply Lundström  result if $A$ contains torsion or the stabilizers are all finite but not of the same order. Using a slightly different approach we deal in the following corollary  with the case when $A$ has torsion and all stabilisers are finite.

\medskip
{\bf   Corollary D} {\it Let  $G = A \wr_X B$ be a group,  where $B$ acts on $X$ with finitely many orbits. Let $D$ be the kernel of the action of $B$ on $X$.
 Assume that

1)  every non-trivial conjugacy class in $B$ of an element of $D$ is non-abelain, 

2) $B$ is a  Hopfian group, A is a finitely generated abelian group and if $A$ has exponent 2 then $D$ does not have elements of order 2,

3)    $Aut(B)$ permutes $\{ stab_B(x) \}_{x \in X}$,

4)  one of the following holds :

4.1) for every $x \in X$ the stabiliser  $stab_B(x)$ is closed in the profinite topology of $G$, 

4.2)   for every $x \in X$   the stabiliser $stab_B(x)$ is finite and for every prime $p$  such that $A_p \not= 0$ the group $stab_B(x)$ is finite    of order coprime to $p$  and $\mathbb{F}_p B$ is  stably finite.

Then  $G$ is Hopfian.
}

\medskip
The group  $G = \frac{\Z}{m \Z} \wr_X B$ from  Proposition A   satisfies 1) (actually the action is faithful), 2), 3) but not 4) as the stabiliser $H \simeq \Z$ is neither finite nor closed in the profinite topology of $B$.  We think of this group as a minimal example where Corollary D is not aplicable for $\mathbb{F}_p \wr_X B$ and $\Z \wr_X B$. Still we do not know whether the group $\Z \wr_X B $ with $X$ and $B$ defined as in  Proposition A is Hopfian as we do not know whether $End_{\Z B} (\Z X)$ is directly finite in this case.

We state some applications  of Corollary D in section \ref{applications}.

The methods used in the proofs of the previous results naturally lead to a description of  $Aut(A \wr_X B)$ we state in Proposition \ref{decompose}. We note that $ Aut(A \wr B)$ was studied by   Houghton in \cite{Ho} and by Genevois and Tessera in \cite{G-T}. Some important subgroups of $Aut(A \wr_X B)$ were studied by Mohammadi Hassanabadi  in \cite{Mo} and
a  description of $Aut(A \wr_X B)$ was obtained by Bodnarchuk  in \cite{Bod} under the assumption that $B$ acts transitively and faithfully on $X$. We do not require that the action of $B$ on $X$ is faithful or transitive but still we impose conditions on the action.

\section{Hopfian and non-Hopfian wreath products}

Recall that a ring $R$ with a unit $1$ is directly finite if for some $a,b \in R$ we have $ab = 1$  then $ba = 1$. By a famous result of Kaplansky \cite{Ka}, see \cite{Mo} for $C^*$-algebra proof, if $K$ is a field of characteristic 0 and $G$ is an arbitrary group then the group algebra $KG$ is directly finite.  Furthermore $M_n(KG)$ is directly finite, which is equivalent to every epimorphism $(KG)^n \to (KG)^n$ of $KG$-modules should be an isomorphism.

\begin{theorem}\cite{Lu}  \label{beautiful} Let $B$ acts on $X$ with finitely many orbits, $B(x) = stab_B(x)$ and for every two $x,y \in X$ assume that $$[B(x) : B(x) \cap B(y)] = [B(y) : B(y) \cap B(x)]$$ Then for a field $K$ of characteristic zero   the ring $End_{K B}( K X)$ is directly finite.
\end{theorem}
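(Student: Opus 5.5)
The strategy is to realise $End_{KB}(KX)$ as a ring of column-finite matrices carrying a faithful trace, and then run Kaplansky's positivity argument in the form of Passman's algebraic proof. This is the generalisation referred to in the introduction (\cite{Pass}, \cite{Al}).

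First decompose $X=\bigsqcup_{i=1}^n Bx_i$ with $H_i=B(x_i)$. Then $KX\cong\bigoplus_i Ind_{H_i}^B K$, so
$$End_{KB}(KX)\cong\bigoplus_{i,j} Hom_{KB}(K[B/H_j],K[B/H_i]).$$
Each summand is identified with the $K$-space of $H_i$-invariant, finitely supported functions on $B/H_j$ (the image of $x_j$), equivalently with finitely supported functions on the double coset space $H_i\backslash B/H_j$. Hence $R=End_{KB}(KX)$ is an $n\times n$ generalised Hecke algebra with basis indexed by double cosets $H_igH_j$.

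Next define a trace $\tau:R\to K$. Let $\tau(\varphi)$ be the sum over $i$ of the coefficient of $x_i$ in $\varphi(x_i)$, i.e.\ the coefficient of the trivial double coset $H_i1H_i$ in the diagonal blocks. The key computation is $\tau(\varphi\psi)=\tau(\psi\varphi)$. Composing a basis element for $H_igH_j$ with one for $H_jg^{-1}H_i$ produces a coefficient at the identity double coset equal to $|H_jg^{-1}H_i/H_i|=[H_j:H_j\cap gH_ig^{-1}]$. In the reverse order the coefficient is $[H_i:H_i\cap g^{-1}H_jg]$. These are indices of the form $[B(x):B(x)\cap B(y)]$ with $x=x_j$, $y=gx_i$, which is where the hypothesis is used. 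The equality assumption is exactly what makes $\tau$ a trace; note that the indices are finite because the orbits $B(x)y$ are finite for finitely supported homomorphisms. I expect this bookkeeping, with the double coset counts in both orders, to be the main obstacle.

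With the trace in hand, I reduce to $K\subseteq\mathbb{C}$: any counterexample $ab=1$ involves finitely many scalars, so pass to a finitely generated subfield and embed it in $\mathbb{C}$. Then define an involution $*$ on $R$ by sending the basis element of $H_igH_j$ to the basis element of $H_jg^{-1}H_i$ with conjugated coefficients. Then $\tau(\varphi\varphi^*)$ is a sum of $|c|^2$ times positive weights, so $\tau$ is faithful and positive. One checks compatibility, i.e.\ that $*$ is an anti-automorphism; again the index equality is needed here for the weights to match. Finally, Kaplansky's argument: if $ab=1$, then $e=ba$ is idempotent. Passing to the completion (the von Neumann algebra of the Hilbert module $\ell^2$-completion of $\mathbb{C}X$ with normalised weights, or Passman's algebraic positivity argument directly), $e$ is similar to a self-adjoint projection $f$. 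Then $\tau(1-e)=\tau(ab)-\tau(ba)=0$, so $\tau(1-f)=0$, and faithfulness forces $f=1$, hence $e=1$.
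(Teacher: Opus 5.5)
Your outline is correct and is essentially the argument the paper defers to \cite{Lu}, which generalises Passman's trace--positivity proof of Kaplansky's theorem as in \cite{Al}: the index equality is exactly what makes the diagonal-coefficient functional $\tau$ tracial on the double-coset basis $\{e_D\}$, and positivity under $e_D\mapsto e_{D^{-1}}$ together with Kaplansky's idempotent-to-projection argument finishes the proof. The small slips are harmless ($|H_jg^{-1}H_i/H_i|$ equals $[H_j:H_j\cap g^{-1}H_ig]$), and no weights are needed: $e_D\mapsto e_{D^{-1}}$ is just the adjoint on the unweighted $\ell^2(X)$, each $e_D$ is bounded since it has a constant finite number of entries $1$ in every row and column (finite on both sides by the hypothesis), and $\tau(T)=\sum_i\langle T\delta_{x_i},\delta_{x_i}\rangle$ is normal, hence still tracial on the weak closure, and faithful there because its elements commute with $B$.
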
 
In the above theorem the equality $[B(x) : B(x) \cap B(y)] = [B(y) : B(y) \cap B(x)]$ does not force the numbers to be finite but if one is finite the other is finite and equal to it.

We will show that Theorem \ref{beautiful} does not hold if the condition   $[B(x) : B(x) \cap B(y)] = [B(y) : B(y) \cap B(x)]$ is removed. In the following proposition $t^{-1} H t \cap H = t^{-1} H t$ has index $m+1$ in $H$ and index 1 in $t^{-1} H t$.

Recall that for a ring $R$ the group of invertible  elements is denoted by $R^*$.

\begin{prop}  \label{example} Let  $m \geq 2$ be an integer, $K$  a commutative ring with $m+1 \in K^*$  and consider the following HNN extension  $$B = \langle h_0, t \ | \ t^{-1} h_0 t = h_0^{m+1} \rangle \hbox{ and }  H = \langle h_0 \rangle$$
Then the ring
  $End_{K B} ( Ind_H^B ( K))$ is not  directly finite.
\end{prop}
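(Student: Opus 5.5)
The plan is to exhibit two explicit endomorphisms $\alpha,\beta$ of $Ind_H^B(K)=KX$, where $X=B/H$, with $\alpha\beta=1$ and $\beta\alpha\neq 1$. I will use the standard description of $End_{KB}(KX)$ for a transitive $B$-set $X=B/H$. A $KB$-endomorphism $\varphi$ is determined by $v=\varphi(H)\in KX$, and $v$ must lie in $(KX)^H$. Conversely, every $v\in (KX)^H$ defines an endomorphism $\varphi_v(gH)=gv$, which is well defined precisely because $v$ is $H$-invariant. In particular, $v$ may be any $K$-combination of finite $H$-orbits on $X$.

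First I record the group-theoretic facts. By the defining relation, $t^{-1}Ht=\langle h_0^{m+1}\rangle\subseteq H$. Moreover $h_0$ has infinite order in $B$ (Britton's lemma, or the embedding $H\hookrightarrow B$ of an HNN extension). Hence $t^{-1}Ht$ has index exactly $m+1$ in $H$, with coset representatives $h_0^0,\dots,h_0^m$. Conjugating, $H\subseteq tHt^{-1}$, and $h_0^i\notin t^{-1}Ht$ for $0<i\le m$. Equivalently, $th_0^it^{-1}\notin H$ for $0<i\le m$, so the cosets $th_0^it^{-1}H$, $0\le i\le m$, are pairwise distinct.

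Next I define the two endomorphisms.

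Let $\alpha=\varphi_{tH}$. The element $tH$ is $H$-invariant, since for $h\in H$ we have $h\,tH=t(t^{-1}ht)H=tH$ because $t^{-1}ht\in H$. So $\alpha(gH)=gtH$.

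Let $\beta=\varphi_w$ with
$$w=\frac{1}{m+1}\sum_{i=0}^{m}h_0^i t^{-1}H ,$$
using $m+1\in K^*$. The stabiliser of $t^{-1}H$ in $H$ is $H\cap t^{-1}Ht=t^{-1}Ht$, of index $m+1$. Hence the $H$-orbit of $t^{-1}H$ is exactly $\{h_0^it^{-1}H\}_{0\le i\le m}$, and $w$ is a scalar multiple of this finite orbit sum, hence $H$-invariant.

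Now I compute the two composites.

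For $\alpha\beta$: since $\alpha$ is $KB$-linear,
$$\alpha\beta(H)=\alpha(w)=\frac{1}{m+1}\sum_{i=0}^m h_0^it^{-1}\,tH=\frac{1}{m+1}\sum_{i=0}^m h_0^iH=H .$$
So $\alpha\beta=1$.

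For $\beta\alpha$:
$$\beta\alpha(H)=\beta(tH)=tw=\frac{1}{m+1}\sum_{i=0}^m th_0^it^{-1}H .$$
By the first step this is a combination of $m+1\ge 2$ distinct basis elements of $KX$, each with nonzero coefficient $(m+1)^{-1}$. Hence it is not equal to $H$, and $\beta\alpha\neq 1$.

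So the ring is not directly finite. The only point needing genuine care is the group-theoretic input: that $h_0$ has infinite order and that $t^{-1}Ht$ is exactly $\langle h_0^{m+1}\rangle$, which makes the cosets $th_0^it^{-1}H$ distinct. I would get both directly from Britton's lemma for the HNN extension. Everything else is a formal computation in the permutation module.
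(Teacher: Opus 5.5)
Your proof is correct and follows the paper's argument. You use the same endomorphisms $\alpha(v)=tv$ and $\beta(v)=\sum_{j=0}^{m} h_0^j t^{-1}v$, except that you put the factor $\frac{1}{m+1}$ on $\beta$ rather than on $\alpha$, and your computations of $\alpha\beta$ and $\beta\alpha$ match the paper's.

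The only real difference is how you show $\beta\alpha\neq 1$. You read off the support $\{th_0^it^{-1}H\}_{0\le i\le m}$ directly in the permutation basis, where the paper uses an annihilator argument with $1-th_0t^{-1}$. You also make explicit the input the paper leaves implicit, namely that $h_0$ has infinite order, so that $t^{-1}Ht=\langle h_0^{m+1}\rangle$ has index exactly $m+1$ in $H$.
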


\begin{proof} Consider $V = Ind_H^B ( K)  = K[ B/H] = KB v$, where $hv = v$ for $h \in H$. Note that $V \simeq KX$, where $X = B/H$.

Consider $\alpha, \beta \in End_{K B} ( Ind_H^B ( K))$  defined by $$\alpha (v) =  t v \hbox{ 
and }\beta(v) = (\sum_{0 \leq j \leq m} h_0^j)t^{-1} v$$They are well defined since for $h \in H$ we have $$h t v = t t^{-1} h t v  = t h^{m+1} v  =t v $$ and for $h = h_0^{(m+1)s + j_0}$ for some $ j_0 \in \{ 0,1, \ldots , m \}$ we have  $$h (\sum_{0 \leq j \leq m} h_0^j)t^{-1} v  =  (\sum_{0 \leq j \leq m} h_0^j) ht^{-1} v  =   (\sum_{0 \leq j \leq m} h_0^j)  h_0^{j_0}  h_0^{(m+1)s} t^{-1} v  =$$ $$(\sum_{0 \leq j \leq m} h_0^j)  h_0^{j_0} t^{-1}   h_0^{s}  v  =
(\sum_{0 \leq j \leq m} h_0^j)  h_0^{j_0} t^{-1}    v  = (\sum_{0 \leq j \leq m} h_0^j)  t^{-1}    v$$
since $h_0^{m+1+i} t^{-1} v =h_0^i t^{-1} th_0^{m+1} t^{-1}v = h_0^i t^{-1} h_0v =  h_0^i t^{-1} v$.

Then
$$\alpha \beta (v) = \alpha ( (\sum_{0 \leq j \leq m} h_0^j)t^{-1} v)   =(\sum_{0 \leq j \leq m} h_0^j)   t^{-1} \alpha (v) =$$ $$ (\sum_{0 \leq j \leq m} h_0^j)   t^{-1} t v = (\sum_{0 \leq j \leq m} h_0^j) v =
(m+1) v$$ and
$$\beta \alpha (v) = \beta ( t v ) = t \beta(v)  = t(\sum_{0 \leq j \leq m} h_0^j)t^{-1} v =
(\sum_{0 \leq j \leq m} h_0^{j/ (m+1)}) v$$

Since $m+1 \in K^*$ we get $(\frac{1}{m+1} \alpha) \beta = Id_V$ but $\beta ( \frac{1}{m+1} \alpha) \not= Id_V$.  Indeed  $$ ( 1 - h_0^{1/ ( m+1)})\beta \alpha(v)  = ( 1 - h_0^{1/ ( m+1)}) (\sum_{0 \leq j \leq m} h_0^{j/ (m+1)}) v = (1 - h_0) v = 0$$ but  $ (m+1) ( 1 - h_0^{1/ ( m+1)})v \not= 0$, hence $ \beta \alpha (v) \not= (m+1)v$.

\end{proof}

All modules and actions considered are left ones.

Let $B$ be a group and $X$ a set such that $B$  acts on  $X$ on the left i.e. there is a map $B \times X \to X$ that sends $(b,x)$ to $b*x$ such that $b_1* (b_2 * x) = (b_1 b_2) * x$ and $1 * x = x$.   Thus $$X = \cup_{i \in I} B/H_i  = \cup_{i \in I} B * x_i $$ is a disjoint union with $H_i = stab_B(x_i)$ the stabilizer of $x_i$ in $B$.

Let $A$ be  a group,  $$G = A \wr_X B$$  Note that $G = M \rtimes B$ where
 $M$ is the normal closure of $A$ in $G$ . We will consider only the case when $A$ is abelian, then $M = AX = \oplus_{x \in X} Ax$, where $Ax \simeq A$, and for $m= \sum_{x \in X} a_x x \in M$, $b \in B$ we have $bmb^{-1} = \sum_{x \in X} a_x (b*x)$. 
 Define $$\pi : G \to B$$  the canonical projection with kernel $M = AX $.
 
  We use $\circ$ for the  conjugation action   (on the left) of $B$ on $M$ i.e. $$b \circ m = bm b^{-1}$$ thus $b \circ x = bxb^{-1} = b * x$ for $x \in X, b \in B$.
  The case when $X = B$ with $b_1 * b_2 = b_1b_2$ leads to the definition of the restricted wreath product $G = A \wr B$.
For $X =   B/ H = \{ b H \ | \ b \in B \}$ we have the regular left action of $B$ on $X$ i.e. $b_0 * bH = b_0 b H$.

\begin{cor} \label{cor1} Let $m \geq 2$ be an integer,   $B$ the group defined in Proposition \ref{example},  $G= \mathbb{Z} / m \mathbb{Z} \wr_X B$ and $X = B/ H = \{ b H \ | \ b \in B \}$ is one $B$-orbit. Then $B$ is a Hopfian group but  $G$ is not Hopfian.
\end{cor}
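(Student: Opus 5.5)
The plan is to treat the two claims separately: $B$ is Hopfian by a general theorem, and $G$ is non-Hopfian because the non-directly-finite endomorphism ring of Proposition \ref{example} yields a surjective, non-injective endomorphism of $G$.

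\emph{$B$ is Hopfian.} $B$ is the Baumslag--Solitar group $BS(1,m+1)$. It is finitely generated and metabelian, since $B \simeq \Z[1/(m+1)] \rtimes \Z$. By Hall's theorem \cite{Hall2}, finitely generated metabelian groups are residually finite. By Mal'cev's theorem \cite{Mal}, finitely generated residually finite groups are Hopfian. Hence $B$ is Hopfian.

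\emph{$G$ is not Hopfian.} Take $K = \Z/m\Z$. This is a nonzero commutative ring because $m \geq 2$, and $m+1 = 1$ in $K$, so $m+1 \in K^*$ trivially. Write $G = M \rtimes B$ with $M = KX = Ind_H^B(K) = V$, where $B$ acts on $M$ by conjugation as described above.

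Proposition \ref{example} supplies $KB$-endomorphisms $\alpha, \beta$ of $V$ with $\frac{1}{m+1}\alpha\beta = Id_V$ and $\beta\frac{1}{m+1}\alpha \neq Id_V$. In $K$ the scalar $\frac{1}{m+1}$ equals $1$, so these read $\alpha\beta = Id_V$ and $\beta\alpha \neq Id_V$. From $\alpha\beta = Id_V$, the map $\alpha$ is surjective. The map $\alpha$ is also not injective: if it were, then $\alpha(\beta\alpha(v)) = (\alpha\beta)\alpha(v) = \alpha(v)$ would force $\beta\alpha = Id_V$.

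Define $\varphi : G \to G$ by $\varphi(mb) = \alpha(m)\, b$ for $m \in M$ and $b \in B$. Because $\alpha$ is $B$-equivariant, $\alpha(b \circ m) = b \circ \alpha(m)$. Therefore
$$\varphi(m_1 b_1 m_2 b_2) = \varphi\big((m_1 + b_1\circ m_2)\, b_1 b_2\big) = \big(\alpha(m_1) + b_1 \circ \alpha(m_2)\big)\, b_1 b_2 = \varphi(m_1 b_1)\,\varphi(m_2 b_2),$$
so $\varphi$ is a homomorphism. Its image contains $B$ and $\alpha(M) = M$, so $\varphi$ is surjective. Its kernel contains $\ker \alpha \neq 0$, so $\varphi$ is not injective. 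Thus $G$ is isomorphic to its proper quotient $G/\ker\varphi$, and $G$ is not Hopfian.

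\emph{Main obstacle.} The only delicate point is checking that $\beta\alpha \neq Id_V$ remains valid over $K = \Z/m\Z$. The argument in Proposition \ref{example} uses $(1-h_0^{1/(m+1)})v \neq 0$ in $KX$. This holds because $h_0^{1/(m+1)} = t h_0 t^{-1} \notin H$, so $v$ and $h_0^{1/(m+1)}v$ are distinct basis elements of the free $K$-module $KX$, and $K \neq 0$. Everything else is formal.
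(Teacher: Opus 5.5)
Your proposal is correct and follows the paper's proof. The paper also uses the endomorphism of $G = M \rtimes B$ that is the identity on $B$ and $\frac{1}{m+1}\alpha$ on $M$, which is just $\alpha$ over $\Z/m\Z$. It gets surjectivity from $\frac{1}{m+1}\alpha\beta = Id_M$ and non-injectivity from $0 \neq \frac{1}{m+1}\beta\alpha(v) - v \in \ker\alpha$, and it proves $B$ Hopfian via Hall's and Mal'cev's theorems. Your explicit homomorphism check and your verification that $\beta\alpha \neq Id_V$ over $\Z/m\Z$ only fill in details the paper leaves implicit.
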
 

\begin{proof} Note that $G = M \rtimes B$ for $M = Ind_H^B( \mathbb{Z}/ m \mathbb{Z})$.
Consider $ \theta : G \to G$ the homomorphism that is the identity on $B$ and $\theta |_M = \frac{1}{m+1}\alpha$ where $\alpha$ was defined in the proof of Proposition \ref{example}. Then $ \frac{1}{m+1} \alpha \beta  = Id_M$ implies $M \leq Im (\theta)$, hence $\theta$ is surjective. But $0 \not=  \frac{1}{m+1}\beta \alpha (v) - v \in Ker (\alpha)  = Ker (\theta)$, so $\theta$ is not injective, where $v$ was defined in the proof of Proposition \ref{example}, actually $X = B * v$ with $stab_B(v) = H$.

To prove that $B$ is Hopfian observe that $B$ is  metabelian and finitely generated hence by a result of Philip Hall is residually finite \cite{Hall2}. But finitely generated residually finite groups are Hopfian \cite{Mal}.
\end{proof}

A non-trivial conjugacy class $^B b$ ( i.e. $b \not= 1$) is called non-abelian if it contains two elements that do not commute.

\begin{lemma} \label{tech} 
Let $G = A \wr_X B$ be a group where $A$ is abelian  and $\theta: G \to G$ an epimorphism. Let $D$ be the kernel of the action of $B$ on $X$. Assume that  

1) every non-trivial conjugacy class in $B$ of an element of $D$ is non-abelian.

2) if $A$ has exponent 2 then $D$ does not have elements of order 2.

Then

a) $\theta(M) \subseteq M$,

b) If furthermore $B$ is Hopfian, then $\theta(M) = M$ and $\sigma = \pi (\theta |_B) : B \to B$ is an isomorphism,  where $\pi : G \to B$ is the canonical projection with kernel $M$.

\end{lemma}

\begin{proof}
a)
Let $m_0 \in M = AX, b_0 \in B$ be such that $m_0 b_0 \in \theta(M)$. Since $\theta$ is surjective and $M$ is an abelian normal subgroup of $G$ we conclude that $\theta(M)$ is an abelian normal subgroup of $G$,  hence the normal subgroup of $G$ generated by $m_0 b_0$ is abelian. Let $m_1\in M$ then the elements
$ ^{m_1} (m_0 b_0)$ and $ m_0 b_0 \hbox{ commute}$.
Note that
$$
(^{m_1} (m_0 b_0)) m_0 b_0 = m_1  m_0 (^{b_0 } m_1^{-1}) b_0    m_0   b_0 = 
m_1  m_0 (^{b_0 } m_1^{-1})   (^{b_0} m_0)   b_0^2$$
and  
$$
m_0 b_0(^{m_1} (m_0 b_0)) =   m_0 b_0  m_1  m_0 (^{b_0} m_1^{-1})  b_0 = 
 m_0 (^{b_0}  m_1)  (^{b_0} m_0) (^{b_0^2} m_1^{-1})  b_0^2$$
Hence the right hand sides of the above two equalities are equal, that together with the fact that conjugates of $m_0, m_1$ commute implies after some cancellation  
$$m_1 (^{b_0 } m_1^{-1})   =  (^{b_0}  m_1)  (^{b_0^2} m_1^{-1})
$$
Using again that conjugates of $m_0$ and $m_1$ commute and 
moving to additive notation in $M$, using $\circ$ for the conjugation action (on the left) of $B$ on $M$ we get
$$  ( b_0 -1)^2 \circ m_1= 0, \hbox{ hence } (b_0 -1)^2 \in Ann_{\Z B} (M)$$

If $A$ is of infinite exponent then $ Ann_{\Z B} (M) = \cap_{i \in I, g \in B} \z B(g H_i g^{-1} - 1)$
where $X = \cup_{i \in I} B/ H_i$ is the decomposition of $X$ as a disjoint union of $B$-orbits.
Since $(b_0 -1)^2 \in  \z B (g H_i g^{-1} - 1) \subset \Z B$ is equivalent to $b_0  \in g H_i g^{-1}$, we get
$$b_0 \in \cap_{i, g}  g H_i g^{-1} = D$$
where $D$ is the kernel of the action of $B$ on $X$.

If $b_0  \not= 1$, note that  since $m_0 b_0 \in \theta(M)$ and $ \theta(M)$  is normal in  $G$  then for any 
$b_1 \in B$ the elements $^{b_1}(m_0 b_0)$ and $m_0 b_0$ belong to the abelian group $\theta(M)$, hence they commute. Working modulo $M$ we get that $^{b_1}b_0$ commutes with $b_0$ a contradiction with $b_0 \not= 1$.

If $A$ is an abelian group of exponent $n> 1$ then $ Ann_{\Z B} (M) = \cap_{i, g \in B} (\z B(g H_i g^{-1} - 1) + n \Z B)$. Set $\mathbb{F}_n = \Z / \Z_n$. If $n > 2$ since  $(b_0 -1)^2 \in  \z B (g H_i g^{-1} - 1) + n \Z$ is equivalent to $(b_0 -1)^2 \in  \mathbb{F}_n B (g H_i g^{-1} - 1)$, hence
$b_0  \in g H_i g^{-1}$, we can proceed as before to get $b_0 \in \cap_{i, g}  g H_i g^{-1} = D$ and $b_0 = 1$. 

If $n = 2$ we have  $(b_0 -1)^2 \in  \z B (g H_i g^{-1} - 1) + 2 \Z B $ is equivalent to $b_0^2 -1 \in \mathbb{F}_2 B (g H_i g^{-1} - 1) \subset \mathbb{F}_2 B$. Hence $b_0^2 \in D$ and 
 $^{b_1}((m_0 b_0)^2)$ commutes with $(m_0 b_0)^2$ in $G$, so $^{b_1} ( b_0^2)$ commutes with $b_0^2$ in $B$, as before this implies  $b_0^2 = 1$. Since in this case $D$ does not have elements of order 2 we have $b_0 = 1$.

\medskip
b) Let $\sigma = \pi (\theta |_B) : B \to B$ where $\pi : G \to B$ is the canonical projection with kernel $M$. Then since $\theta$ is surjective and $\theta(M) \subseteq M$ we have that $\sigma$ is surjective. Since $B$ is Hopfian, $\sigma$ is an isomorphism. Since $\theta$ is surjective for $m \in M$ there are $\widetilde{m} \in M, \widetilde{b} \in B$ 
such that $m = \theta( \widetilde{m}) \theta(\widetilde{b}) \in M \theta(\widetilde{b}) = M \sigma(\widetilde{b})$, so $m \in M \cap M \sigma(\widetilde{b})$, hence $\sigma(\widetilde{b}) = 1$ and $\widetilde{b} = 1$. Thus $M = \theta(M)$ as claimed.
\end{proof}

\begin{lemma} \label{decompose0} 
Let $G = A \wr_X B$ be a group. Let $D$ be the kernel of the action of $B$ on $X$. Assume that

1) every non-trivial conjugacy class in $B$ of an element of $D$ is non-abelian,

2) $B$ is Hopfian,

3)   $Aut(B)$ permutes $\{ stab_B(x) \}_{x \in X}$,

4) $A$ is  abelian and if $A$ has exponent 2 then $D$ does not have elements of order 2.

Then  every epimorphism $\theta : G \to G$ could be decomposed as $$\theta = \theta_2 \theta_1$$ where

i)  $\theta_2 |_M \in End_{\Z B} ( A X)$, $\pi \theta_2 |_B = Id_B$, where $\pi : G \to B$ is the canonical projection with kernel $M$.

ii)  $\theta_1 |_X$  is a permutation of $X$, $\theta_1 |_B = \sigma \in Aut(B)$,  thus $\theta_1 \in Aut(G)$.
\end{lemma}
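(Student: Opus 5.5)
Lemma \ref{tech} is our starting point. Given an epimorphism $\theta : G \to G$, hypotheses 1), 2), 4) give $\theta(M) = M$, and hypothesis 2) makes $\sigma = \pi(\theta|_B) : B \to B$ an automorphism. The idea is to realise $\sigma$ by an automorphism $\theta_1$ of $G$ that permutes $X$, and then to set $\theta_2 = \theta\theta_1^{-1}$. We will then check that $\theta_2$ restricted to $M$ is $\Z B$-linear and that $\pi\theta_2|_B = Id_B$.

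\emph{Construction of $\theta_1$.} Write $X = \cup_{i\in I} B * x_i$ with $H_i = stab_B(x_i)$. By hypothesis 3), $\sigma$ permutes the family $\{stab_B(x)\}_{x\in X}$, so $\sigma(H_i) = stab_B(y_i)$ for some $y_i \in X$. Define $\tau : X \to X$ by $\tau(b * x_i) = \sigma(b) * y_i$. This is well defined: $b * x_i = b' * x_i$ means $b^{-1}b' \in H_i$, hence $\sigma(b)^{-1}\sigma(b') \in stab_B(y_i)$. It is injective on each orbit because $stab_B(y_i) = \sigma(H_i)$ exactly. By construction $\tau(b * x) = \sigma(b) * \tau(x)$.

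\emph{The main obstacle} is that $\tau$ must be a bijection of $X$ globally. This needs two things. First, distinct orbits must go to distinct orbits, so the $y_i$ must be chosen in pairwise different orbits. Second, $\tau$ must be surjective, so every orbit must be hit. Conjugates of $H_i$ are stabilisers in the same orbit, so the issue reduces to matching orbits. We would interpret hypothesis 3) as saying that each $\sigma$ induces a bijection of the multiset of stabilisers; then orbits with a given conjugacy class of stabiliser (counted with multiplicity) are permuted, and we choose the $y_i$ by a bijection between the orbits with stabiliser class $[H]$ and those with class $[\sigma(H)]$. If two orbits share a stabiliser, we pair them up arbitrarily. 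With $\tau$ bijective, define $\theta_1$ on $G = M\rtimes B$ by $\theta_1(\sum a_x x) = \sum a_x \tau(x)$ and $\theta_1|_B = \sigma$. The compatibility $\tau(b\circ x) = \sigma(b)\circ\tau(x)$ shows $\theta_1$ respects the semidirect product relations, so it is a homomorphism. Its inverse is built from $\tau^{-1}$ and $\sigma^{-1}$, so $\theta_1 \in Aut(G)$, which gives ii).

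\emph{Checking i).} Put $\theta_2 = \theta\theta_1^{-1}$. Then $\theta_2(M) = M$. Moreover $\pi\theta_2|_B = \pi\theta\theta_1^{-1}|_B = \sigma\sigma^{-1} = Id_B$, since $\theta_1^{-1}(B) = B$ and $\pi\theta|_B = \sigma$. So for $b \in B$ we have $\theta_2(b) = m_b b$ with $m_b \in M$. For $m \in M$, the element $\theta_2(bmb^{-1}) = m_b b\,\theta_2(m)\,b^{-1}m_b^{-1}$ equals $b\circ\theta_2(m)$, because $M$ is abelian. Hence $\theta_2|_M$ is $\Z B$-linear, i.e.\ it lies in $End_{\Z B}(AX)$. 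Finally $\theta = \theta_2\theta_1$ holds by construction.
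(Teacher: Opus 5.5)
Your proof is correct and is essentially the paper's: you realise $\sigma$ by an automorphism $\theta_1$ that moves $X$ by a $\sigma$-equivariant bijection sending orbits to orbits, then set $\theta_2=\theta\theta_1^{-1}$ and check $\Z B$-linearity using that $M$ is abelian. The paper likewise reads hypothesis 3) as directly providing a permutation $\pi$ of the orbit index set $I$ with $\sigma(H_i)=g_iH_{\pi(i)}g_i^{-1}$, which is the orbit matching you make explicit (strictly, it is this orbit-level reading that is needed, not a multiset count of stabilisers indexed by $x\in X$, since the two can differ when $[N_B(H_i):H_i]$ is infinite).
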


\begin{proof} 
By Lemma \ref{tech} $\theta(M) = M$ and $ \sigma = \pi \theta|_B$ is an isomorphism.  Recall that $M = A X$ and the action of $B$ on $X$ via conjugation in $G$ is the original action of $B$ on $X$. 
  Let $X = \cup_{i \in I} B/H_i  = \cup_i B * x_i $ be the decomposition as a disjoint union of $B$-orbits.
By condition 3)  $$\sigma(H_i) =  g_i H_{\pi(i)} g_i^{-1}$$ for some $g_i \in B$ and a permutation $\pi$ of $I$. Define a homomorphism $\theta_1 : G \to G$ such that $$\theta_1(x_i) =g_i * x_{\pi(i)}  \in X \subset M\hbox{ and }\theta_1 |_B = \sigma$$ i.e. for $g \in B$ we have  $\theta_1(g * x_i) = \theta_1(^g x_i) = \theta_1(g) \theta_1(x_i) \theta_1(g)^{-1}  = \sigma(g) *(  g_i * x_{\pi(i)})  = ( \sigma(g) g_i)  * x_{\pi(i)}$. In particular   we get that $\theta_1$ is well defined since for $g \in H_i$ we have $\theta_1(x_i) = \theta_1(g * x_i)  = ( \sigma(g) g_i)  * x_{\pi(i)} \in  g_i H_{\pi(i)} g_i^{-1}  g_i *  x_{\pi(i)} =  g_i H_{\pi(i)} *  x_{\pi(i)} = g_i *  x_{\pi(i)} $.
It is easy to see that $\theta_1$ is an automorphism of $G$. 

Then $\theta_2 = \theta (\theta_1)^{-1} : G \to G$ is an epimorphism such that for every $b \in B$ we have $\theta_2 (b)  \in  M b$. By Lemma \ref{tech} $\theta_2(M) = M$ and $\theta_2 |_M$ is a homomorphism of  $\Z B$-modules since $\theta_2 (b \circ m ) = \theta_2(^b m) = ^{\theta_2(b)} \theta_2(m) = ^b \theta_2(m) = b \circ \theta_2(m) $.
 \end{proof}

Let $A$ be a finitely generated abelian group.  We denote by $A_p$  the $p$-primary part of $A$ and by $A_0$ we denote  $A / tor(A)$, where $tor(A)$ is the torsion-free part of $A$.

\medskip
{\bf Proof of Theorem B}
b) implies a) If $\theta : G \to G$ is an epimorphism then by Lemma \ref{tech} $\theta(M) = M$ and $ \sigma = \pi \theta|_B$ is an isomorphism.  

By Lemma \ref{decompose0}  $\theta = \theta_2 \theta_1$ where $\theta_2 |_M \in End_{\Z B} ( A X)$, $\theta_2 |_B = Id_B$ and  $ \pi \theta_1 |_B = \sigma \in Aut(B)$, $\theta_1 \in Aut(G)$.
 Since $\theta$ is an epimorphism then $\theta_2 |_M : M \to M$ is an epimorphism of $\mathbb{Z} B$-modules, hence $\theta_2 |_M$ is an isomorphism of $\mathbb{Z} B$-modules since  $End_{\Z B}(A X)$ is directly finite. Then $\theta_2$  and $\theta$ are isomorphisms.

a) implies b) Read backwords the proof of b) implies a) .

b) implies c) Note that  $A = \oplus_{p \geq 0} A_p$, where we have fixed a subgroup of $A$ isomorphic to $A_0$ and denoted it by the same name. Then
$A X = \oplus_{p \geq 0} A_p X$ and  for $p > 0$ the subgroup $A_p X$ is the $p$-primary part of $AX$. 

If $\alpha_p \in  End_{\Z B} (A_p X)$ are surjective maps, then $\alpha = \oplus_{p \geq 0} \alpha_p \in  End_{\Z B} (A X)$ is surjective, hence an isomorphism. In particular each $\alpha_p$ is an isomorphism.

c) implies b) 
Let $\alpha \in End_{\Z B} (A X)$ be surjective i.e. an epimorphism of $\Z B$-modules.
Then $\alpha (A_p X) \subseteq  A_p X$ for all $p >0$. 

Let $\pi_0: AX \to A_0 X$ be the map that is identity on $A_0$ and sends each $A_p X$ to $0$ for $p >0$. 
Then the map $\alpha_0 = \pi_0 \alpha |_{A_0 X} \in  End_{\Z B} (A_0 X)$ is surjective and since $End_{\Z B} (A_0 X)$ is directly finite $\alpha_0$ is an isomorphism, hence injective. Then $Im ( \alpha |_{A_0 X}) \cap  \oplus_{p >0} A_p X = 0$. Then since $\alpha$ is surjective, $\alpha_p = \alpha |_{A_p X}$ is surjective for $p > 0$. Since $End_{\Z B} (A_p X)$ is directly finite $\alpha_p$ is an isomorphism for all $p >0$. Hence $\alpha$ is an isomorphism.

c) implies d) First for $p > 0$ note that $A_p$  has a finite filtration $\{ p^j A_p \}_{j \geq 0}$ where each quotient is isomorphic to $V_{p,s} = (\mathbb{F}_p)^s$ for some $s \leq n_p$ with $n_p$ attained,  we conclude that  $End_{\Z B} (A_p X)$ is directly finite if and only if  $End_{\Z B} (V_{p,s} X)$ is directly finite for all $s$ that appear as quotients in the above filtration. But $V_{p,s} X \simeq (\mathbb{F}_p X)^s$ implies $End_{\Z B} (V_{p,s} X) \simeq M_s( End_{\Z B} ({\mathbb{F}_p} X))$. Since for $s_1 < s_2$ we have that if $M_{s_2}( End_{\Z B} ({\mathbb{F}_p} X))$ is directly finite then $M_{s_1}( End_{\Z B} ({\mathbb{F}_p} X))$ is directly finite we need only to include in our conclusions that $M_{n_p}( End_{\Z B} ({\mathbb{F}_p} X)) = M_{n_p}( End_{\mathbb{F}_p B} ({\mathbb{F}_p} X))$ is directly finite.

If $A_0 \not= 0$ then $A_0 \simeq \Z^{n_0}$, $A_0 X \simeq (\Z X)^{n_0}$ and $End_{\Z B} (A_0 X) \simeq M_{n_0} (End_{\Z B} (\Z X))$.

d) implies c) Read backwords  the proof of c) implies d).

\medskip
{\bf Proof of Corollary C}
By Theorem B we have to show that $End_{\Z B} (A X)$ is directly finite, where $A = \Z^n$. Note that $AX = ( \Z X)^n = \Z Y$, where $Y$ is the disjoint union of $n$ copies of $X$. Then we can apply Theorem \ref{beautiful} for $\mathbb{Q} Y$ and conclude that  $End_{\mathbb{Q} B} (\mathbb{Q} Y)$ is directly finite, hence  $End_{\Z B} (\Z Y)$ is directly finite.  This completes the proof.

We give special attention to the case when all  stabilisers are finite groups.

\begin{lemma}  \label{finite02}  Suppose $K$ is a field, $I$ is a finite set,  $V = \oplus_{i \in I} Ind_{H_i}^B ( K)$  where $H_i$ are finite groups such that if the characteristic of $K$ is $p > 0$ then for every $i \in I$, $p$ does not divide the order of $H_i$. Suppose that the ring $KB$ is  stably finite. Then
$End_{KB} (V)$ is stably finite.
\end{lemma}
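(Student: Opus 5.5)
The approach is to realise each $Ind_{H_i}^B(K)$ as a direct summand of $KB$ given by an idempotent, so that $End_{KB}(V)$ becomes a corner ring $e M_n(KB) e$, and then to use that a corner ring of a stably finite ring is stably finite.

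\textbf{Step 1 (idempotents).} Since $H_i$ is finite and $|H_i|$ is invertible in $K$, the element
$$e_i = \frac{1}{|H_i|}\sum_{h \in H_i} h \in KH_i \subseteq KB$$
is an idempotent. We have $KB e_i \simeq K[B/H_i] = Ind_{H_i}^B(K)$ as left $KB$-modules, via $b e_i \mapsto bH_i$. Indeed, $KB e_i$ is spanned by the $b e_i$, the element $b e_i$ depends only on the coset $bH_i$, and distinct cosets give elements with disjoint supports. Hence $Ind_{H_i}^B(K)$ is a direct summand of $KB$.

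\textbf{Step 2 (corner ring).} Let $n = |I|$ and let $e = diag(e_i)_{i \in I} \in M_n(KB)$, which is an idempotent. Then $V \simeq (KB)^n e$, viewing elements as row vectors with left $KB$-action. Moreover
$$End_{KB}(V) \simeq e M_n(KB) e,$$
possibly up to passing to the opposite ring. This is the standard identification $End_R(Re) \simeq (eRe)^{op}$ for $R = M_n(KB)$, with right multiplication acting on $Re$. The group algebra $KB$ has the involution $b \mapsto b^{-1}$, which extends to $M_n(KB)$ by conjugate transpose. A ring is directly finite if and only if its opposite ring is, so the $op$ is harmless here. In the same way, $M_k(End_{KB}(V)) \simeq End_{KB}(V^k)$, and this is again a corner ring $e' M_{nk}(KB) e'$ with $e' = diag(e,\dots,e)$.

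\textbf{Step 3 (corners inherit direct finiteness).} Let $S$ be directly finite and $f \in S$ an idempotent. Suppose $a, b \in fSf$ with $ab = f$. Put $a' = a + (1-f)$ and $b' = b + (1-f)$. Then $a'b' = ab + (1-f) = 1$, because $a(1-f) = 0$ and $(1-f)b = 0$. Direct finiteness of $S$ gives $b'a' = 1$, so $ba + (1-f) = 1$ and therefore $ba = f$. Applying this with $S = M_{nk}(KB)$, which is directly finite because $KB$ is stably finite, shows that every $M_k(End_{KB}(V))$ is directly finite. Hence $End_{KB}(V)$ is stably finite.

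The main thing to get right is Step 2: the isomorphism $End_{KB}(V)\simeq eM_n(KB)e$ or its opposite, which needs careful left/right bookkeeping. This is exactly where the hypothesis that $p \nmid |H_i|$ is essential. Without it the modules $Ind_{H_i}^B(K)$ are not projective, and Proposition \ref{example} shows that the conclusion can fail.
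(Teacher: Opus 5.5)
Your proof is correct and follows the paper's route: you realise $Ind_{H_i}^B(K)\simeq KBe_{H_i}$ as a direct summand of $KB$, view $M_k(End_{KB}(V))\simeq End_{KB}(V^k)$ inside $M_{nk}(KB)$, and invoke stable finiteness of $KB$. Your Step 3 (the $a+(1-f)$ trick) and the opposite-ring remark make rigorous what the paper compresses into calling $End_{KB}(V^n)$ a ``subring'' of $M_{sn}(KB)$, since it is really a corner with identity $f\neq 1$. One small inaccuracy in your closing remark: Proposition \ref{example} has $H\simeq\Z$ infinite, so it shows failure when the stabiliser is infinite, not when $H_i$ is finite with $p$ dividing $|H_i|$.
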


\begin{proof} In the case when $K$ has characteristic 0 we know that $KB$ is stably finite. 
Note that
 $Ind_{H_i}^B ( K) \simeq K B \otimes_{K H_i} K$ is a direct summand of $K B$, since
$$K B \simeq  K B e_{H_i} \oplus K B (1 - e_{H_i})$$
where $e_{H_i} = \frac{1}{| H_i |} \sum_{h \in H_i} h$ is an idempotent and  $Ind_{H_i}^B ( K)  \simeq  K B e_{H_i}$.
Then for $|I| = s$,
$V = \oplus_{i \in I}  Ind_{H_i}^B ( K)$ is a direct summand of $(K B)^s$, hence
$$M_n(End_{K B} (V)) \simeq End_{K B} (V^n) \leq End_{K B} ( (KB)^{sn}) = M_{sn} (KB)$$
i.e. the left hand side is a subring of the right hand side. But $M_{sn} (KB)$ is directly finite when  $KB$ is  stably finite.
\end{proof}

\begin{cor} \label{limit} 
Let  $G = A \wr_X B$ be a group, where $B$ acts with finitely many orbits on $X$.
Let $D$ be the kernel of the action of $B$ on $X$.
Assume that

1)  every non-trivial conjugacy class in $B$ of an element of $D$ is non-abelian,

2) $B$ is  a Hopfian group, $A$ is a finitely generated  abelian group and if $A$ has exponent 2 then $D$ does not have elements of order 2,

3)    $Aut(B)$ permutes $\{ stab_B(x) \}_{x \in X}$,

4)  there exists a nested set $\mathcal{U} = \{ U \}$ of normal subgroups of $B$ such that   $$\cap_{U \in \mathcal{U}}  stab_B(x) U = stab_B(x) \hbox{ for }x \in X$$ and one of the following holds

4.1) for all $U \in \mathcal{U}$ the index $[B : U] <  \infty$,

4.2) for all $U \in \mathcal{U}$,  $x \in X$   the groups $stab_B(x) U/ U$ are finite and for every prime $p$  such that $A_p \not= 0$ the groups $stab_B(x) U/ U$ are finite    of order comprime to $p$  and $\mathbb{F}_p [B/U]$ is  stably finite.

Then  $G$ is Hopfian.

\end{cor}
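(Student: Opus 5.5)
By Theorem B, conditions 1)--3) reduce the claim to showing that $End_{\Z B}(AX)$ is directly finite. Write $X = \cup_{i \in I} B/H_i$ with $I$ finite. By the c)$\Leftrightarrow$b) part of the proof of Theorem B it suffices to treat each primary component. Replacing $X$ by the disjoint union $Y$ of $n_p$ (resp.\ $n_0$) copies of $X$, which still has finitely many orbits and the same family of stabilisers, it suffices to show the following. For every $K \in \{\mathbb{F}_p : A_p \not= 0\}$, every surjective $\alpha \in End_{KB}(KY)$ is injective. For $p=0$ we use $K=\mathbb{Q}$: since $\Z Y \subset \mathbb{Q} Y$ and surjectivity passes to $\mathbb{Q} \otimes_{\Z} -$, injectivity over $\mathbb{Q}$ gives injectivity over $\Z$, because $\Z Y$ is torsion free.

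The key step is to push $\alpha$ down to the quotients $B/U$. For $U \in \mathcal{U}$ let $Y_U = U \backslash Y$. The orbit $B/H_i$ maps onto $B/H_iU$, a $B/U$-set, and $KY_U = K \otimes_{KU} KY = (KY)_U$ is the module of $U$-coinvariants. Since $U$ is normal and $\alpha$ is $KB$-linear, $\alpha$ induces a surjection $\alpha_U \in End_{K[B/U]}(KY_U)$. I would then argue as follows.

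In case 4.1, $Y_U$ is finite. Hence $KY_U$ is a finite-dimensional $K$-space and $\alpha_U$ is bijective.

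In case 4.2, $KY_U = \oplus Ind_{H_iU/U}^{B/U}(K)$ with finite stabilisers of order invertible in $K$. For $K$ of characteristic $0$ the group algebra $K[B/U]$ is stably finite by Kaplansky. So Lemma \ref{finite02} makes $End_{K[B/U]}(KY_U)$ stably finite, in particular directly finite, and therefore the surjection $\alpha_U$ is an isomorphism.

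The remaining, and main, obstacle is to conclude that $\alpha$ itself is injective from the injectivity of all $\alpha_U$. Let $0 \neq w \in \ker \alpha$, supported on finitely many points $y_1,\dots,y_k$ of $Y$. The condition $\cap_U stab_B(x)U = stab_B(x)$, together with the nestedness of $\mathcal{U}$, should give a single $U$ such that $y_1,\dots,y_k$ have distinct images in $Y_U$. This holds because two points $gx_i \neq g'x_i$ of one orbit are identified in $Y_U$ only if $g^{-1}g' \in H_iU$. For each of the finitely many pairs, some $U$ avoids this, and by nestedness the smallest of these finitely many $U$ works for all pairs simultaneously; points in different orbits are never identified. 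Then the image $\bar w$ of $w$ in $KY_U$ is nonzero and $\alpha_U(\bar w) = \overline{\alpha(w)} = 0$, which contradicts the injectivity of $\alpha_U$. Hence $\alpha$ is an isomorphism, so $End_{\Z B}(AX)$ is directly finite, and Theorem B gives that $G$ is Hopfian. The delicate point to check is that the nested family is directed downward, so that a single $U$ can be chosen; this is exactly the property nestedness provides.
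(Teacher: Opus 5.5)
Your proof is correct and follows the paper's route. You push the endomorphism down to the coinvariants $KY_U$ and get bijectivity there from finiteness of $Y_U$ in case 4.1 or from Lemma \ref{finite02} in case 4.2. You then lift back by separating the finitely many support points inside a single member of the chain. The difference from the paper is only in the formulation: the paper starts from $\alpha\beta=Id_{AX}$ and uses injectivity of $End_{\Z B}(AX)\to\varprojlim_U End_{\Z[B/U]}(AX_U)$, while you follow a kernel element of a surjective $\alpha$. Your formulation has the small merit of directly giving ``every epimorphism of $AX$ is injective'', which is the property the proof of Theorem B actually uses.

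One step needs an extra sentence. In case 4.2, direct finiteness of $End_{K[B/U]}(KY_U)$ makes the surjection $\alpha_U$ injective only once $\alpha_U$ has a right inverse. That right inverse exists because $KY_U$ is projective, being a direct summand of $(K[B/U])^s$ as in the proof of Lemma \ref{finite02}, so $\alpha_U$ splits.
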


\begin{proof}
Let  $X = \cup_{i \in I}  B /H_i =  \cup_{i \in I} \{ bH_i \ | \ b \in B  \}$. 

Suppose $\alpha, \beta \in End_{\mathbb{Z}B} (A X)$ be such that $\alpha \beta = Id_{AX}$. Let $\alpha_U, \beta_U$ be the images of $\alpha, \beta$ in $End_{\mathbb{Z}[B/U]} (A X_U)$ where $X_U = U \backslash X = \cup_{i \in U} B/ H_i U$. Then   $\alpha_U \beta_U = Id_{AX_U}$.
We claim that $End_{\mathbb{Z}[B/U]} (A X_U)$ is directly finite, hence 
  $\beta_U \alpha_U =  Id_{A X_U}$.
 Then note that $X_U$ is a quotient of $X$ and this induces a map  $$X \to \varprojlim_{U \in \mathcal{U}} X_U$$ that is injective since  $\cap_{U \in \mathcal{U}}   H_i U = H_i$ for all $i \in I$. Then the induced map
 $$  End_{\mathbb{Z} B } (AX) \to \varprojlim_U   End_{\mathbb{Z}[B/U]} (A X_U)$$ is injective and
 $\beta \alpha$  and $\alpha \beta$ map to the same element $\prod_U (\beta_U \alpha_U) = \prod_U ( \alpha_U \beta_U)$ in the inverse limit, so $\beta \alpha = \alpha \beta$.

To prove the claim that $End_{\mathbb{Z}[B/U]} (A X_U)$  is directly finite consider two cases:

1) Suppose first that condition 4.1) holds.
Then $A X_U$ is a finitely generated $\Z$-module, so $A X_U = \Z^m \oplus C$, where $C$ is finite.  Hence  any epimorphism of  abelian groups $ \Z^m \oplus C \to  \Z^m \oplus C$ is an isomorphism, in particular $\alpha_U$ is an isomorphism, hence $\beta_U \alpha_U =  Id_{A X_U}$.

2) Suppose condition 4.2) holds. 
By the equivalence of parts b) and c) of Theorem B it suffices to show that $End_{\Z [B/U]} (\Z X_U) $ is directly finite if $A$ is infinite and  $End_{ \Z [B/U]} (\mathbb{F}_p X_U) = End_{\mathbb{F}_p  [B/U]} (\mathbb{F}_p X_U)$ is directly finite if $A_p \not= 0$. Note that  $End_{\Z [B/U]} (\Z X_U) $ is a subring of $ End_{\mathbb{Q} [B/U]} (\mathbb{Q} X_U) $ and by Lemma \ref{finite02}   $End_{\mathbb{F}_p  [B/U]} (\mathbb{F}_p X_U)$ and $ End_{\mathbb{Q} [B/U]} (\mathbb{Q} X_U) $ are stably finite.  Note that to apply Lemma \ref{finite02} we used that $\mathbb{Q} [B/U]$ is stably finite for every group $B/U$. 
\end{proof}

{\bf Proof of Corollary D} Follows from the previous corollary applied for $\mathcal{U}$ the set of all normal subgroups of finite index in $B$ or for $\mathcal{U}$ the set with unique element the trivial group.  
\section{Applications of Corollary D} \label{applications}

 We list some classes of groups  $G = A \wr_X B$, $A$ abelian for which we can use our results and deduce that $G$ is Hopfian. In all cases $X = \cup_{i \in  I} B/ H_i$ with $I$ finite.

\medskip
I.  $B$ is a hyperbolic group without  non-trivial normal finite cyclic subgroups, the finite  set  $\{H_i \}$  contains all (up to conjugation) finite cyclic subgroups of $B$ of order that belongs to a  fixed finite set of integers. 

 It is known that:

 1)  hyperbolic groups are Hopfian, see the introduction of \cite{F-S}. 
 
 2)  there are only finitely many (up to conjugation) elements of a fixed finite order in $B$ \cite{B-G},\cite{Brady}

3) it is an open problem whether all hyperbolic groups are sofic.

4)   the intersection of all conjugates of all $H_i$ is a finite normal subgroup of $B$, hence is trivial i.e. the action of $B$ on $X$ is faithful.
 
 \medskip
 Then by Corollary D  $\Z^n \wr_X B$ is Hopfian. 
 
 The Kaplansky direct finiteness conjecture holds for any field $K$ for sofic groups \cite{E-S}  and if $B$ is sofic then $A \wr_X B$ is Hopfian for all finitely generated groups $A$.

\medskip
II.
$B$ is an arithmetic group  without non-trivial finite normal subgroups,  finite set $\{ H_i \}$ contains  all ( up to conjugation) finite subgroups of $B$ of some fixed orders. 

It is known that:

1)  $B$ is finitely generated and linear, hence residually finite by Malcev´s result \cite{Mal}, hence Hopfian \cite{Mal}. 

2) there are finitely many conjugacy classes of finite subgroups of $B$ \cite{Borel} . 

3) every residually finite group is sofic, hence $B$ is sofic.

4) since by assumption there is no non-trivial  finite normal subgroup in $B$, the  action of $B$ on $X$ is faithful.

\medskip
Then by Corollary D the group $G = A \wr_X B$ is Hopfian for all finitely generated abelian groups $A$. 

5) If all $H_i$ are closed in the profinite topology of $B$ by \cite{Cor2} $G$ is residually finite.

\section{The automorphism group of $G = A \wr_X B$}

Let $X$ be a set on which $B$ acts (on the left). We say the action is $Aut(B)$-good if there is a homomorphism of groups
$ \psi : Aut(B) \to Perm(X)$
such that 
 \begin{equation} \label{good1}\psi(\sigma) (g * x) = \sigma(g) * \psi(\sigma)(x) \hbox{ for all }\sigma \in Aut(B), g \in B, x \in X \end{equation} 

\medskip
\begin{lemma} \label{Aut-good}   Let $B$ be a group, $X = \cup_{i \in I}B/ H_i = \cup_i B * x_i $ is a disjoint union, $B$ acts on $B/H_i $ via multiplication (on the left). Then  the action of $B$ on $X$ is $Aut(B)$-good if  and only if $ Aut(B)$ permutes the set  $\{ stab_B(x) \ | \ x \in X \}$.

\end{lemma}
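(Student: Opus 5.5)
The plan is to prove the two implications separately, using the identification $B*x_i \cong B/H_i$, $g*x_i \leftrightarrow gH_i$, with $H_i = stab_B(x_i)$.

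\emph{Good implies permutes.} Suppose $\psi$ is a homomorphism satisfying (\ref{good1}). Fix $\sigma \in Aut(B)$ and $x \in X$, and put $y=\psi(\sigma)(x)$. For $g \in stab_B(x)$, (\ref{good1}) gives $\sigma(g)*y = \psi(\sigma)(g*x)=y$, so $\sigma(stab_B(x)) \subseteq stab_B(y)$. Applying the same argument to $\sigma^{-1}$ at the point $y$, and using $\psi(\sigma^{-1})=\psi(\sigma)^{-1}$, gives the reverse inclusion. Hence $\sigma(stab_B(x))=stab_B(\psi(\sigma)(x))$. Since $\psi(\sigma)$ is a bijection of $X$, the map $S\mapsto \sigma(S)$ sends the set $\{stab_B(x)\}_{x\in X}$ into itself. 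The same holds for $\sigma^{-1}$, so this map is a permutation of that set.

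\emph{Permutes implies good.} I would copy the construction of $\theta_1$ in Lemma \ref{decompose0}. For each $\sigma$, choose a permutation $\pi_\sigma$ of $I$ and elements $g_i(\sigma)\in B$ with $\sigma(H_i)=g_i(\sigma)H_{\pi_\sigma(i)}g_i(\sigma)^{-1}$. Then define
$$\psi(\sigma)(g*x_i)=(\sigma(g)g_i(\sigma))*x_{\pi_\sigma(i)}.$$
The computation in Lemma \ref{decompose0} shows this is well defined. It satisfies (\ref{good1}) by construction, and it is a bijection because $\pi_\sigma$ is one and each orbit map $B/H_i\to B/H_{\pi_\sigma(i)}$ is a bijection. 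Two things then remain to check.

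First, the permutation $\pi_\sigma$ of orbits must exist. Here (\ref{good1}) forces $\psi(\sigma)$ to send an orbit with stabiliser class $[H]$ to an orbit with class $[\sigma(H)]$. So $\sigma$ must induce a bijection between the orbits of type $[H]$ and those of type $[\sigma(H)]$. Knowing only that the \emph{set} of stabilisers is $\sigma$-invariant does not control these multiplicities. For example, take $\sigma$ swapping non-conjugate subgroups $H$ and $K$, and let $X=B/H\sqcup B/K\sqcup B/K$. Then $\sigma$ permutes the set of stabilisers, yet the two orbits of type $[K]$ cannot be matched bijectively with the single orbit of type $[H]$. So at this step I would have to read ``permutes $\{stab_B(x)\}_{x\in X}$'' as a statement about the indexed family $x\mapsto stab_B(x)$, counted with multiplicity. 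Under that reading, $\pi_\sigma$ exists for each $\sigma$.

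Second, and this is the main obstacle, $\psi$ must be a homomorphism. The maps $\psi(\sigma\tau)$ and $\psi(\sigma)\psi(\tau)$ both satisfy (\ref{good1}) for $\sigma\tau$. Hence they differ by a $B$-equivariant permutation of $X$, that is, by an element of $Aut_B(X)$. That group is generated by permutations of isomorphic orbits together with right multiplications by $N_B(H_i)/H_i$. My first attempt would be to choose $\pi_\sigma$ and $g_i(\sigma)$ coherently:
\begin{itemize}
\item choose orbit representatives along each $Aut(B)$-orbit of stabiliser classes, so that as often as possible $g_i(\sigma)=1$ and $\sigma(H_i)=H_{\pi_\sigma(i)}$ on the nose;
\item then check the identities $\pi_{\sigma\tau}=\pi_\sigma\pi_\tau$ and $g_i(\sigma\tau)\equiv\sigma(g_i(\tau))\,g_{\pi_\tau(i)}(\sigma)$ modulo $H_{\pi_{\sigma\tau}(i)}$.
\end{itemize}
These are exactly the conditions for the composite formulas to agree. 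Where this normalisation cannot be made exact, the obstruction is a $2$-cocycle of $Aut(B)$ with values in $Aut_B(X)$. I would aim to show it vanishes by the choice of representatives, and would expect that to be the genuinely delicate part of the argument.
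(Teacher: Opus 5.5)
Your forward direction is correct and is the paper's argument. The paper phrases it with orbit representatives, $\psi(\sigma)(x_i)=g_0*x_j$, and gets $g_0H_jg_0^{-1}=\sigma(H_i)$ by running the same computation for $\sigma^{-1}$. For the converse the paper makes exactly your construction, $\psi(\sigma)(x_i)=g_0*x_j$ whenever $\sigma(H_i)=g_0H_jg_0^{-1}$. It then only asserts that this is well defined and satisfies (\ref{good1}); it never checks that $\psi(\sigma)$ is a bijection or that $\psi$ is a homomorphism. Your multiplicity objection is correct. Concretely, take $B=\mathbb{Z}^2$, $\sigma$ the coordinate swap, $H=\mathbb{Z}\times 0$, $K=0\times\mathbb{Z}$ and $X=B/H\sqcup B/K\sqcup B/K$. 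So the ``if'' direction is false as literally stated.

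The gap in your proposal is the last step, where you hope to make the obstruction in $Aut_B(X)$ vanish by choosing representatives. This cannot be done in general, even when multiplicities match. Take $B=D_8=\langle r,s\mid r^4=s^2=1,\ srs^{-1}=r^{-1}\rangle$ and $X=B/\langle s\rangle\sqcup B/\langle rs\rangle$.
\begin{itemize}
\item The stabilisers are exactly the four reflection subgroups $\langle r^ks\rangle$, and every automorphism permutes them.
\item Each of the two conjugacy classes of stabilisers occurs in exactly one orbit, so your multiplicity condition holds.
\item Let $I_r$ be conjugation by $r$. Since $I_r(\langle s\rangle)=\langle r^2s\rangle$, your identity $\sigma(stab_B(x))=stab_B(\psi(\sigma)(x))$ forces $f=\psi(I_r)$ to preserve the orbit $B/\langle s\rangle$.
\item Identify that orbit with $\mathbb{Z}/4$, where $r$ acts by $k\mapsto k+1$ and $s$ by $k\mapsto -k$.
\item By (\ref{good1}), $f$ commutes with $k\mapsto k+1$, so $f(k)=k+a$. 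The relation $f(s*x)=(r^2s)*f(x)$ then gives $2a\equiv 2\pmod 4$.
\end{itemize}
Thus $f$ is a rotation of order $4$, while $I_r^2=I_{r^2}=\mathrm{id}$, so no homomorphism $\psi$ exists.

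The converse therefore needs a stronger hypothesis. One that works: the representatives can be chosen with $\sigma(H_i)=H_{\pi_\sigma(i)}$ exactly, for some homomorphism $\sigma\mapsto\pi_\sigma\in Perm(I)$. Then $\psi(\sigma)(gH_i)=\sigma(g)H_{\pi_\sigma(i)}$ is well defined, satisfies (\ref{good1}), and is a homomorphism. This matters downstream, because Lemma \ref{easy} and Proposition \ref{decompose} rely on $\psi$ being a homomorphism.
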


\begin{proof} Here $H_i = stab_B(x_i)$. Let $\sigma \in Aut(B)$ and  $\psi : Aut(B) \to Perm(X)$  satisfies (\ref{good1}). Let $\psi( \sigma) (x_i) = g_0 * x_j $ for some $g_0 \in B$. Then   for $h_i \in H_i$ we have
$ g_0 * x_j  = \psi( \sigma) (x_i) = \psi( \sigma) (h_i * x_i) = \sigma(h_i) * \psi(\sigma) ( x_i) =  \sigma(h_i) * ( g_0 * x_j)  =(\sigma(h_i) g_0) * x_j$, hence 
$x_j =  ( g_0^{-1} \sigma(h_i) g_0) * x_j $ and $ g_0^{-1} \sigma(h_i) g_0 \in  H_j$ i.e.  $ g_0^{-1} \sigma(H_i) g_0 \subseteq  H_j$. 

Note that $x_i = \psi(\sigma^{-1})(g_0 * x_j) = \sigma^{-1}(g_0) * \psi(\sigma^{-1}) (x_j)$, so $\sigma^{-1}(g_0^{-1}) * x_i = \psi(\sigma^{-1}) (x_j)$. Applying the same argument as above with $i$ and $j$ swapped, $\sigma$ and $g_0$  changed to $\sigma^{-1}$ and $\sigma^{-1}(g_0^{-1})$ we get $\sigma^{-1}(g_0) \sigma^{-1} (H_j) \sigma^{-1}(g_0^{-1}) \subseteq H_i$ i.e. $g_0 H_j g_0^{-1} \subseteq \sigma(H_i)$. Hence  $g_0 H_j g_0^{-1} = \sigma(H_i)$.

For the converse suppose that  $ Aut(B)$ permutes the set  $\{ stab_B(x) \ | \ x \in X \}$ and $\sigma \in Aut(B)$. If $\sigma(H_i) = stab_B(g_0 * x_j) = g_0 H_j g_0^{-1}$ define $\psi(\sigma) (x_i) = g_0 * x_j$. It is easy to check $\psi$ is well-defined and satisfies (\ref{good1}).  
\end{proof}

 \begin{lemma} \label{easy} Let $B$ be a group acting (on the left) on a set $X$ such that $Aut(B)$ permutes $\{ stab_B(x) \}_{x \in X}$ and $A$ be an abelian group. Then
there is a monomorphism of groups $$\rho : Aut(B) \to Aut(A \wr_X B)$$  such that $\rho(\sigma) |_B = \sigma$, $\rho(\sigma) (ax)  = a  \psi(\sigma)(x) $ for $a \in A, x \in X$ and  $ \psi$ satisfies (\ref{good1}). 
 \end{lemma}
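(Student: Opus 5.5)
By Lemma \ref{Aut-good}, the hypothesis that $Aut(B)$ permutes $\{ stab_B(x) \}_{x \in X}$ gives a homomorphism of groups $\psi : Aut(B) \to Perm(X)$ satisfying (\ref{good1}). I would use $\psi$ to define $\rho(\sigma)$ on the two factors of $G = M \rtimes B$, where $M = AX = \oplus_{x \in X} Ax$. On $B$ set $\rho(\sigma)|_B = \sigma$. On $M$ set $\rho(\sigma)(\sum_x a_x x) = \sum_x a_x \psi(\sigma)(x)$. Since $\psi(\sigma)$ is a bijection of $X$, the second map is an automorphism of the abelian group $M$: it permutes the summands $Ax$, and its inverse is induced by $\psi(\sigma)^{-1} = \psi(\sigma^{-1})$.

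The key step is to check that these two maps glue to an endomorphism of the semidirect product. By the universal property of $M \rtimes B$, the only thing to verify is compatibility with conjugation:
$$\rho(\sigma)(b \circ m) = \sigma(b) \circ \rho(\sigma)(m) \quad \hbox{for all } b \in B,\ m \in M.$$
By additivity it suffices to take $m = ax$. Then the left side is $a\,\psi(\sigma)(b*x)$, and the right side is $a\,(\sigma(b) * \psi(\sigma)(x))$. These agree precisely by (\ref{good1}). This compatibility check is the one place where the hypothesis is really used, and I expect it to be the main (though mild) obstacle.

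Next I would show that $\rho$ is a homomorphism. For $\sigma, \tau \in Aut(B)$, the maps $\rho(\sigma\tau)$ and $\rho(\sigma)\rho(\tau)$ agree on $B$ because both restrict to $\sigma\tau$. They agree on $M$ because $\psi$ is a homomorphism, so $\psi(\sigma\tau) = \psi(\sigma)\psi(\tau)$. Since $G$ is generated by $M$ and $B$, the two maps are equal. In particular $\rho(\sigma^{-1})$ is a two-sided inverse of $\rho(\sigma)$, because $\rho(\mathrm{id}) = \mathrm{id}$ (we have $\psi(\mathrm{id}) = \mathrm{id}$ as $\psi$ is a homomorphism). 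Hence each $\rho(\sigma)$ lies in $Aut(A \wr_X B)$.

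Finally, injectivity is immediate: if $\rho(\sigma) = \mathrm{id}_G$, then restricting to $B$ gives $\sigma = \rho(\sigma)|_B = \mathrm{id}_B$. The stated properties $\rho(\sigma)|_B = \sigma$ and $\rho(\sigma)(ax) = a\,\psi(\sigma)(x)$ hold by construction.
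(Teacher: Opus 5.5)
Your proof is correct and follows the same route as the paper: you obtain $\psi$ from Lemma \ref{Aut-good}, define $\rho(\sigma)$ as $\sigma$ on $B$ and via $\psi(\sigma)$ on $AX$, and then verify that $\rho(\sigma)$ is an automorphism and that $\rho$ is an injective homomorphism. The paper only says these verifications are ``easy to check''; your proposal supplies them, namely the compatibility check via (\ref{good1}), multiplicativity from $\psi(\sigma\tau)=\psi(\sigma)\psi(\tau)$ (which also gives invertibility of each $\rho(\sigma)$), and injectivity by restricting to $B$.
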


 \begin{proof}
 By Lemma \ref{Aut-good} there is $\psi$ as in (\ref{good1}). It is easy to check that $\rho(\sigma)$ is an automorphism of $A \wr_X B$ and
 $\rho$  is a  homomorphism.
 \end{proof}

 Let $V$ be a left $B$-module.
 A derivation $\gamma : B \to V$ is a linear map such that $$\gamma(b_1 b_2) = \gamma(b_1) + b_1 \circ \gamma(b_2)$$ where the action of $B$ on $V$ is denoted by $\circ$. Denote by $Der(B, V)$ the abelian group of all derivations with group operation traditional sum of functions.
 
 For $m \in AX$ denote by $\gamma_m \in Der(B, AX)$ the principal derivation defined by $\gamma_m(b) = (1 - b) \circ m$.
In a group $H$ denote by $I_{h_0}$ the inner automorphism defined by $I_{h_0}(h) = h_0 h h_0^{-1}$.

Suppose $ \psi : Aut(B) \to Perm(X)$ satisfies (\ref{good1}).
For $b_0 \in B$ define the linear map $$\nu_{b_0} : AX \to AX $$  by $ \nu_{b_0}(ax) = a( b_0 * [\psi (I_{b_0})^{-1}(x)]) \in aX$   for all $x \in X$, $a \in A$.
Let $$\delta : B \to Iso_{\Z B}(A X) $$  be the map that sends $b$ to $\nu_b$. It is easy to check that $\delta$ is an antihomomorphism.
Note that for $b_0 \in B$   \begin{equation} \label{conta2} \nu_{b_0} \in  Iso_{\Z B}(A X). \end{equation} 
Indeed
 $\nu_{b_0} ( b \circ ax) = \nu_{b_0} (  a( b * x)) =a   b_0 * [\psi (I_{b_0})^{-1}( b * x)] = $ $a b_0 * ( b_0^{-1} b b_0 *  [\psi (I_{b_0})^{-1}(  x)])= a(b b_0) *  [\psi (I_{b_0})^{-1}(  x)] =$ $a b * ( b_0 *  [\psi (I_{b_0})^{-1}(  x)]) = b \circ \nu_{b_0} ( ax).
$

 \begin{prop} \label{decompose} 
Let $G = A \wr_X B$ be a group.  Let $D$ be the kernel of the action of $B$ on $X$.
 Assume that

1) A is a finitely generated abelian group and if $A$ has exponent 2 then $D$ does not have elements of order 2,

2)  every non-trivial conjugacy class in $B$ of an element of $D$ is non-abelian, 

3)  $Aut(B)$ permutes $\{ stab_B(x) \}_{x \in X}$.

Then 

a)
\begin{equation} \label{iso-iso} Aut(G) \simeq ( Der(B, AX) \rtimes Iso_{\Z B }(A X)) \rtimes Aut(B) \end{equation}

where $Iso_{\Z B}(AX)$ is the group of  isomorphisms of the (left)  $\Z B $-module $AX$,  in (\ref{iso-iso}) $Aut(B)$ corresponds to $\rho(Aut(B)) \leq Aut(G)$ and  $ Der(B, AX)$ and $ Iso_{\Z B}(A X)$ are invariant under conjugation with $Aut(B)$.

b)   there is  an exact sequence
$$
1 \to ( Iso_{\Z B}(A X))/ \delta(Z(B)) \to Out(G) /  H^1(B, AX) \to Out(B) \to 1$$

\end{prop}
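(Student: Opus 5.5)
We first show that every automorphism $\theta$ of $G$ preserves $M$, and then split $\theta$ into three pieces. Note that Proposition \ref{decompose} does not assume $B$ is Hopfian. That assumption is used in Lemma \ref{tech}(b) only to make $\sigma$ bijective, and for an automorphism this is automatic. Lemma \ref{tech}(a) only needs $\theta$ surjective together with hypotheses 1) and 2), so it gives $\theta(M)\subseteq M$. Applying the same lemma to $\theta^{-1}$ gives $\theta^{-1}(M)\subseteq M$, hence $\theta(M)=M$. Therefore $\theta$ induces an automorphism $\sigma=\pi\theta|_B$ of $B\cong G/M$.

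Set $\theta_1=\rho(\sigma)$, with $\rho$ from Lemma \ref{easy}; this is the canonical choice in the proof of Lemma \ref{decompose0}, now made via $\psi$ from Lemma \ref{Aut-good}. Then $\theta_2=\theta\rho(\sigma)^{-1}$ is an automorphism with $\theta_2(b)\in Mb$ for all $b\in B$. As in the proof of Lemma \ref{decompose0}, $\varphi=\theta_2|_M\in Iso_{\Z B}(AX)$. Write $\theta_2(b)=\gamma(b)b$. The homomorphism property of $\theta_2$ on $B$ is equivalent to $\gamma\in Der(B,AX)$. Conversely, any pair $(\gamma,\varphi)$ defines an automorphism $mb\mapsto \varphi(m)\gamma(b)b$: it is a homomorphism because $\varphi$ is $\Z B$-linear and $\gamma$ is a derivation, and it is bijective because $\varphi$ is.

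So $Aut(G)$ is identified with triples $(\gamma,\varphi,\sigma)$, and it remains to check the semidirect structure. The maps $\theta_2$ form the subgroup $K=\{\theta\in Aut(G) : \theta(M)=M,\ \pi\theta|_B=Id_B\}$, which is the kernel of $Aut(G)\to Aut(B)$. Hence $K$ is normal, and $\rho$ splits this map. Inside $K$, the elements with $\varphi=Id$ form the kernel of $K\to Iso_{\Z B}(AX)$, $\theta_2\mapsto\theta_2|_M$. Composition on this kernel is addition of derivations, so it is isomorphic to $Der(B,AX)$. The elements with $\gamma=0$ form a complement isomorphic to $Iso_{\Z B}(AX)$, acting on $Der(B,AX)$ by $\gamma\mapsto\varphi\circ\gamma$. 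For invariance, conjugating by $\rho(\sigma)$ sends $\gamma$ to $\Psi\gamma\sigma^{-1}$ and $\varphi$ to $\Psi\varphi\Psi^{-1}$, where $\Psi=\rho(\sigma)|_M$. One checks, using (\ref{good1}), that these are again a derivation and a $\Z B$-isomorphism respectively. This proves a).

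For b), we identify the inner automorphisms. Conjugation by $m\in M$ is the triple $(\gamma_m,Id,Id)$, with $\gamma_m$ the principal derivation, possibly up to a sign convention. So modulo $Inn(G)$ and the derivation part we recover $Der(B,AX)/\{\gamma_m\}=H^1(B,AX)$. Conjugation by $b_0\in B$ has $\sigma=I_{b_0}$. After factoring off $\rho(I_{b_0})$, what remains in $Iso_{\Z B}(AX)$ should be $\nu_{b_0}$ or its inverse; this follows from the definition $\nu_{b_0}(ax)=a(b_0*\psi(I_{b_0})^{-1}(x))$ and (\ref{conta2}).

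The map $Out(G)/H^1(B,AX)\to Out(B)$ is induced by $\theta\mapsto\sigma$. It is surjective via $\rho$. Its kernel consists of the classes of $\theta$ with $\sigma$ inner. Composing with the inner automorphism given by conjugation by $b_0$, we may take $\sigma=Id$, and such a $\theta$ is then determined by its $\varphi$. The choice of $b_0$ is unique only up to $Z(B)$, since $I_{b_0}=I_{b_0z}$ for $z\in Z(B)$, so the kernel is $Iso_{\Z B}(AX)/\delta(Z(B))$. For $z\in Z(B)$ we have $I_z=Id$, so $\nu_z$ is just the action of $z$; these are central in $Iso_{\Z B}(AX)$ and therefore form a normal subgroup.

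The main obstacle is this last bookkeeping. We must verify that $H^1(B,AX)$ is normal in $Out(G)$, so that the quotient makes sense, and that the inner part projects exactly onto $\delta(Z(B))$, with no extra contribution from $\delta(B)$ beyond the central elements. The first point follows from the $Aut(B)$- and $Iso$-invariance of the principal derivations. The second requires writing out conjugation by $mb_0$ explicitly as a triple $(\gamma,\nu_{b_0}^{\pm1},I_{b_0})$ and comparing it with the triples that have $\sigma=Id$.
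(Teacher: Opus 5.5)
Your proposal is correct and follows essentially the paper's route: apply Lemma \ref{tech} to $\theta$ and $\theta^{-1}$, split $\theta=\theta_2\rho(\sigma)$ to get $Aut(G)\simeq (Der(B,AX)\rtimes Iso_{\Z B}(AX))\rtimes Aut(B)$, and read off inner automorphisms as triples to obtain the exact sequence; you also check that every pair $(\gamma,\varphi)$ defines an automorphism, which the paper leaves implicit. Your hedges all resolve: with $\gamma_m(b)=(1-b)\circ m$, the automorphism $I_{m_0b_0}$ corresponds exactly to $(\gamma_{m_0},\nu_{b_0},I_{b_0})$, and the image of $Der(B,AX)$ in $Out(G)$ is normal because $Der(B,AX)\trianglelefteq Aut(G)$, which you already established in a) (the invariance of principal derivations is not the relevant reason).
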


\begin{proof}
a) Let $\theta \in Aut(G)$, by Lemma \ref{tech} $\theta(M) \subseteq M$, similarly $\theta^{-1} (M) \subseteq M$, so $\theta(M) = M$. 
Consider $\sigma = \pi \theta |_B : B \to B$ an epimorphism, where $\pi : G \to B$ is the canonical projection with kernel $M = AX$. We do not assume $B$ is Hopfian but since $\theta \in Aut(G)$ we have  $\sigma \in Aut(B)$. Define  $\theta_1 = \rho(\sigma)$  where $\rho$ is defined in Lemma \ref{easy}. Then
 we can decompose $\theta = \theta_2 \theta_1$ where $\theta_2 |_M \in End_{\Z B} ( A X)$, $\pi \theta_2 |_B = Id_B$. 

For every $b \in B$ we write $\theta_2(b)= \gamma(b)b \in Mb$. Since $\theta_2$ is a homomorphism   
$$
\gamma(b_1 b_2) b_1 b_2 = \theta_2(b_1 b_2) = \theta_2(b_1) \theta_2 (b_2) = \gamma(b_1) b_1 \gamma(b_2) b_2 = \gamma(b_1) ( b_1 \gamma(b_2) b_1^{-1}) b_1 b_2$$
and moving to additive notation in $M$ we have
$\gamma(b_1 b_2)  = \gamma(b_1) + b_1 \circ \gamma(b_2)$
i.e. $\gamma : B \to M$ is a derivation.
Since $\theta_2$ is an isomorphism  $\theta_2 |_M \in Iso_{\Z B} (A X)$.
Thus $\theta_2$ can be described by the pair $(\gamma,\theta_2 |_M)$. 

Consider the group
$$\Delta = \{ \alpha \in Aut(G) \ | \ \pi \alpha |_B = Id_B \}$$
Note that $Aut(G) = \Delta \rho(Aut(B))$ is a semi-direct product with $\rho(Aut(B)) \simeq Aut(B)$, so we have an isomorphism
$Aut(G) \simeq \Delta \rtimes Aut(B)$.

We want to describe the group structure of $\Delta$.
Let $\alpha_1, \alpha_2 \in \Delta$, where  $\alpha_1 = ( \gamma_1, \beta_1),\alpha_2 = ( \gamma_2,\beta_2)$, 
where $\beta_i = \alpha_i |_M$ and $\gamma_i: B \to M$ is a derivation.
Then $\alpha_1 \alpha_2 |_M = \beta_1 \beta_2 = : \beta$ and so $\alpha_1 \alpha_2 = (  \gamma,\beta)$, hence for $b \in B$ we have
$\gamma(b) b = \alpha_1 \alpha_2(b) =\alpha_1( \alpha_2(b)) =\alpha_1( \gamma_2(b) b) = \alpha_1( \gamma_2(b)) \alpha_1(b) =  ( \beta_1 \gamma_2)(b) \gamma_1(b) b
$,
hence moving to additive notation in $M$ we have
$\gamma(b) = ( \beta_1 \gamma_2)(b) + \gamma_1(b)$, 
so the group structure in $\Delta$ is given by
$( \gamma_1,\beta_1) (  \gamma_2,\beta_2) = (\beta_1 \gamma_2  + \gamma_1,\beta_1 \beta_2)$.
Then $\Delta$ can be decomposed as a semi-direct product
$\Delta \simeq  Der(B, AX) \rtimes Iso_{\Z B}(A X)$.

\medskip

b) Consider the inner automorphism $\theta = I_{m_0b_0} \in Inn(G)$, where $m_0 \in M = AX, b_0 \in B$.
  For $m \in M, b \in B$ we have
$ \theta(mb) =  m_0 b_0 m b (m_0 b_0)^{-1} = 
 m_0(^{b_0} m) m_0^{-1} $ $(^{m_0} (^{b_0} b)) =$ $ (^{b_0} m) ( m_0 (^{b_0} b) m_0^{-1})$.
Moving to additive notation in $M$ we get
\begin{equation} \label{conta1}  \theta(mb) =(^{b_0} m) ( m_0 (^{b_0} b) m_0^{-1} (^{b_0} b)^{-1}) (^{b_0} b) = ( b_0 \circ m + m_0 -(^{b_0} b) \circ m_0)   (^{b_0} b) \end{equation}

We decompose $\theta = \theta_2 \theta_1$ where $\theta_1 \in \rho (Aut(B))$,  $\rho$ is defined in Lemma \ref{easy} and $\pi \theta_2 |_B = Id_B$. Note that
$\theta_1 = \rho( \sigma_1)$ where $\sigma_1 = I_{b_0} \in Aut(B)$. Hence for all $b \in B$
$$\theta_1 (b) = ^{b_0} b = b_0 b b_0^{-1} \hbox{
 and }\theta_1( ax) =  a \psi( \sigma_1)(x) \in X$$
Using that $\theta = \theta_2 \theta_1$ and (\ref{conta1})  we have for $b \in B$ 
$$
\theta_2 (b) = \theta \theta_1^{-1} (b) = \theta(b_0^{-1} b b_0) =( (1 - b) \circ m_0) b = \gamma_{m_0} (b) b$$
where $\gamma_{m_0}(b) = ( 1 - b) \circ m_0$.
 For $x_1 \in X = B$ by (\ref{conta1}) we have $ \theta(x_1) = b_0 \circ x_1 = b_0 * x_1$, hence
$\theta_2(ax) = \theta \theta_1^{-1} (ax)  = \theta[ a \psi( \sigma_1^{-1})(x)] =  \theta [a \psi (I_{b_0})^{-1}(x)] =  $
$a[ b_0 \circ \psi (I_{b_0}^{-1})(x)]= a[ b_0 * \psi (I_{b_0}^{-1})(x)]=a \nu_{b_0}(x) = \nu_{b_0}(ax).
$
 Hence $\theta = I_{m_0 b_0}$ corresponds to \begin{equation} \label{feb} (\gamma_{m_0}, \nu_{b_0}, I_{b_0}) \in  (  PDer(B, AX) \rtimes Iso_{\Z B}(A X)  ) \rtimes Inn(B) \end{equation}
where $Inn$ denotes inner automorphisms and  $PDer$ denotes  principal derivations. 

By standard homological algebra  
$Der/ PDer \simeq H^1$ is the first cohomology.
Hence combining (\ref{feb}) with (\ref{iso-iso}) we get the isomorphism 
\begin{equation} \label{important8} Out(G) \simeq  H^1(B, A X) \rtimes ( ( Iso_{\Z B}(A X)  \rtimes Aut(B)) / \Gamma) \end{equation}
where $\Gamma = \{ (\nu_{b_0}, I_{b_0} ) \ | \ b_0 \in B \}$ is   a subgroup of $ Iso_{\Z B}(A X)  \rtimes Aut(B) $.
Note that  $I_{b_0}$ is trivial if and only if  $b_0 \in  Z(B)$, hence   
the  (split) short exact sequence
$1 \to Iso_{\Z B}(A X)  \to  Iso_{\Z B}(A X)   \rtimes Aut(B) \to Aut(B) \to 1$
gives the following (in general non-split) exact sequence
$$
1 \to ( Iso_{\Z B}(A X)  )/ (\Gamma \cap   Iso_{\Z B}(A X)  ) \to (  Iso_{\Z B}(A X) \rtimes Aut(B)) / \Gamma \to Out(B) \to 1$$
Note that $\Gamma \cap   Iso_{\Z B}(A X)  = \{ \nu_{b_0} \ | \ I_{b_0} = 1 \} = \{ \nu_{b_0} \ | \ b_0 \in Z(B) \} $.
Recall that $\delta : B \to Iso_{\Z B }(A X) $  is the antihomomorphism  that sends $b$ to $\nu_b$.
Combining with (\ref{important8}) this yields the (in  general non-split) exact sequence
\begin{equation} \label{important7}
1 \to Iso_{\Z B}(A X)  / \delta(Z(B)) \to Out(G) /  H^1(B, A X) \to Out(B) \to 1 \end{equation}
Finally we note  that it is easy to verify that  $Iso_{\Z B}(AX)$ and  $  Der(B, AX)$  are invariant under conjugation with $Aut(B)$.
\end{proof}

In the next lemma we use the notion of the number $e(B,H)$ of ends of pairs of groups.

\begin{lemma}\label{ends}  Under the hypothesis of Proposition \ref{decompose}   assume further that for $X = \cup_{i \in I} B/H_i$  we have

1) $e(B, H_i) = 1$ and $H_i$ has infinite index in $B$ for each $i \in I$,

2) if $A$ is infinite then for every normal subgroup $N$ of $H_i$ such that $H_i / N \simeq \mathbb{Z}$ we have $e(B, N) = 1$. If the $p$-primary part $A_p$ of $A$ is non-zero  then for every normal subgroup $N$ of $H_i$ of index $p$ we have $e(B, N) = 1$.

Then  $ H^1(B, AX)= 0$ and in particular there is a short exact sequence of groups
\begin{equation} \label{important5}
1 \to ( Iso_{A B}(A X))/ \delta(Z(B)) \to Out(G)  \to Out(B) \to 1\end{equation}
\end{lemma}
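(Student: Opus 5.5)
We need to show $H^1(B, AX)=0$. Then the short exact sequence (\ref{important5}) follows immediately from Proposition \ref{decompose} b), since $Out(G)/H^1(B,AX) = Out(G)$.

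\textbf{Reduction to a single orbit and cyclic coefficients.} Write $AX = \oplus_{i\in I} A[B/H_i] = \oplus_i Ind_{H_i}^B(A)$. Cohomology commutes with finite direct sums in the coefficients, and with arbitrary sums when $B$ is finitely generated. If $I$ is infinite, I would instead argue directly on derivations. So it suffices to show $H^1(B, Ind_{H}^B(A)) = 0$ for $H = H_i$.

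Decompose $A = A_0 \oplus \bigoplus_p A_p$, with $A_0 \simeq \Z^{n_0}$ and each $A_p$ a finite abelian $p$-group. Filtering $A_p$ by $p^jA_p$ gives quotients $\mathbb{F}_p^s$. The long exact sequences then reduce the claim to $H^1(B, \Z[B/H]) = 0$ (when $A$ is infinite) and $H^1(B, \mathbb{F}_p[B/H]) = 0$ (when $A_p\neq 0$). Here we use that $H^1$ vanishing is inherited by extensions: if $H^1$ vanishes on the outer terms of a short exact sequence, it vanishes on the middle term.

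\textbf{Key step: identify $H^1$ of the induced module.} By Shapiro's lemma for the induced module $R[B/H] = Ind_H^B(R)$, one might try $H^1(B, R[B/H]) \simeq H^1(H,R)$. This is wrong, however: Shapiro's lemma applies to coinduced modules, and $R[B/H]$ is induced. Since $[B:H]=\infty$, $R[B/H]$ is the finitely supported submodule of the coinduced module $Coind_H^B R = R^{B/H}$. I would therefore use the short exact sequence
$$0 \to R[B/H] \to R^{B/H} \to R^{B/H}/R[B/H] \to 0 .$$
Its long exact sequence gives
$$H^0(B,R^{B/H}) \to H^0(B,R^{B/H}/R[B/H]) \to H^1(B,R[B/H]) \to H^1(B,R^{B/H}) \simeq H^1(H,R) = Hom(H,R).$$
The $H^0$ quotient is the module of $B$-almost-invariant subsets (functions) of $B/H$ modulo the invariant ones. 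Its rank is exactly what the number of ends $e(B,H)$ measures. For $R=\mathbb{F}_2$ this is the definition, and for $\Z$ and $\mathbb{F}_p$ one can use the analogous description via almost invariant functions. The hypothesis $e(B,H)=1$ together with $[B:H]=\infty$ should make the connecting map zero, so that $H^1(B,R[B/H])$ injects into $Hom(H,R)$.

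\textbf{Killing the remaining image.} A derivation class lands in $Hom(H,R)$. I would show that a nonzero image $\phi: H \to R$ forces a non-trivial almost invariant set for $(B, N)$ with $N = \ker\phi$. Indeed, a nonzero $\phi$ has image $\Z$ (for $R=\Z$, after passing to the image) or $\mathbb{F}_p$. Pulling back along $B/N \to B/H$, the derivation restricted to $N$ vanishes, so the class comes from $H^0$ of the quotient for the pair $(B,N)$. This would give $e(B,N)>1$, which is excluded by hypothesis 2).

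I expect this last step to be the main obstacle. It requires making precise the correspondence between derivations $B\to R[B/H]$ whose restriction to $H$ is $\phi$, and almost invariant functions on $B/N$ with $\phi$-twisted $H/N$-equivariance. It also requires the care needed to pass from $\mathbb{F}_2$-ends to $\Z$ or $\mathbb{F}_p$ coefficients, which I would try first via the standard equivalence of $e(B,N)$ with the dimension of $H^0(B, \mathbb{F}_2^{B/N}/\mathbb{F}_2[B/N])$ and its $\Z$-version.
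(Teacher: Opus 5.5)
Your route differs from the paper's. The paper makes the same reduction, to $H^1(B,Ind_{H_i}^B\mathbb{F}_p)=0$ when $A_p\neq 0$ and $H^1(B,Ind_{H_i}^B\Z)=0$ when $A$ is infinite, and then quotes two results:
\begin{itemize}
\item Kropholler--Roller: $e(B,H_i)=1+\dim_{\mathbb{F}_p}H^1(B,Ind_{H_i}^B\mathbb{F}_p)$ when every index-$p$ normal subgroup $N$ of $H_i$ has $e(B,N)=1$;
\item Swarup: $e(B,H_i)=1+rk\,H^1(B,Ind_{H_i}^B\Z)$ under the corresponding hypothesis for $H_i/N\simeq\Z$.
\end{itemize}
It then adds that $H^1(B,Ind_{H_i}^B\Z)$ is torsion-free, so rank $0$ means zero.

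You instead reprove the half of these formulas that is needed. The long exact sequence of $R[B/H]\subset R^{B/H}$, together with Shapiro's lemma, gives an exact sequence $0\to \mathcal{A}/(R+R[B/H])\to H^1(B,R[B/H])\to Hom(H,R)$, where $\mathcal{A}$ is the set of almost invariant $R$-valued functions. Then $e(B,H)=1$ kills the left term and $e(B,N)=1$ kills the image. This is self-contained, shows exactly where each hypothesis enters, and makes the torsion-freeness step unnecessary. Your injectivity step works for every coefficient group: if $f$ is almost invariant, each preimage $f^{-1}(S)$ with $S\subseteq R$ is an almost invariant subset of $B/H$, so $e(B,H)=1$ forces $f$ to be constant outside a finite set.

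The last step, which you flag as the obstacle, does not work as written for $R=\Z$. When $H/N\simeq\Z$ the fibres of $p:B/N\to B/H$ are infinite. So the pulled-back derivation $p^*d$ takes values in functions supported on finitely many infinite fibres, not in $\Z[B/N]$, and there is no class ``coming from $H^0$ of the quotient for $(B,N)$''.

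What works for both rings is the following. Write $\phi(h)=d(h)(H)$. Since $\phi|_N=0$, Shapiro for $N$ gives $F:B/N\to R$ with $p^*d(b)=(1-b)F$ for all $b$. Each $p^*d(b)$ is invariant under the right action of $H/N$ on $B/N$, and that action commutes with $B$. Hence $F(y\cdot hN)-F(y)$ is $B$-invariant, so constant, and evaluating at $y=N$ gives $F(y\cdot hN)=F(y)+\phi(h)$.
\begin{itemize}
\item For $R=\mathbb{F}_p$: the fibres have $p$ points, so $F$ is almost invariant. Each level set of $F$ meets every fibre in exactly one point, so it is an infinite, co-infinite almost invariant subset of $B/N$.
\item For $R=\Z$: $F$ itself is not almost invariant, but $E=\{y : F(y)\geq 0\}$ is. Let $g>0$ generate $\phi(H)$. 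On a fibre $\Phi$ where $(1-b)F\equiv k\neq 0$, we have $E\cap\Phi=\{F\geq 0\}\cap\Phi$ and $bE\cap\Phi=\{F\geq k\}\cap\Phi$. These are two half-lines differing in at most $|k|/g+1$ points, and $E$ and $bE$ agree on the other fibres. Since $E$ contains an infinite half of every fibre and misses the other half, $E$ is infinite and co-infinite, so $e(B,N)\geq 2$.
\end{itemize}
With this repair your argument is complete.

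If $I$ is infinite, splitting $H^1$ over orbits, which the paper also does tacitly, is immediate when $B$ is finitely generated. Since $R[B/H_i]^B=0$, the elements $m_i$ with $d_i=\gamma_{m_i}$ are unique, and a finite generating set forces almost all of them to be $0$.
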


\begin{proof}  Note that $AX \simeq \oplus_{i \in I} Ind_{H_i}^B(A)$, where $A$ is the trivial $H_i$-module,
hence
$H^1(B, AX) = \oplus_{i \in I} H^1(B, Ind_{H_i}^B(A))$.
Since $A$ is finitely generated $A \simeq \oplus_{p \geq 0} A_p$ where $A_p$ is the $p$-primary part of $A$ and $A_0 \simeq A  / tor(A)$.
Thus it remains to show $H^1(B, Ind_{H_i}^B(\mathbb{F}_p)) = 0$  if  $A_p \not= 0$  and $H^1(B, Ind_{H_i}^B(\Z)) = 0$ if $A_0 \not= 0$.

By \cite{K-R} if $e(B, N) = 1$ for every normal subgroup $N$ of $H_i$ of index $p$ then $e(B, H_i) = 1 + dim_{\mathbb{F}_p} H^1(B,  Ind_{H_i}^B(\mathbb{F}_p))$. This together with $e(B, H_i) = 1$ implies $ H^1(B,  Ind_{H_i}^B(\mathbb{F}_p)) = 0$.

By \cite{Sw}  if $e(B, N) = 1$ for every normal subgroup $N$ of $H_i$  such that $H_i / N \simeq \mathbb{Z}$ then  $e(B, H_i) = 1 + rk H^1(B,  Ind_{H_i}^B(\mathbb{Z}))$. Here it is easy to see that $H^1(B,  Ind_{H_i}^B(\mathbb{Z}))$ is $\mathbb{Z}$-torsion-free since if $f \in Der(B,  Ind_{H_i}^B(\mathbb{Z}))$ with $z f  \in PDer(B,  Ind_{H_i}^B(\mathbb{Z}))$ then $f \in PDer(B,  Ind_{H_i}^B(\mathbb{Z}))$. Then $e(B, H_i) = 1$ implies $ H^1(B,  Ind_{H_i}^B(\mathbb{Z})) = 0$.
\end{proof}

\end{document}